\documentclass[10pt, reqno]{amsart}
\usepackage{graphicx, amssymb, amsmath, amsthm}
\numberwithin{equation}{section}

\usepackage{color}
\usepackage{epsfig}
\usepackage{subfigure}
\usepackage{tikz}
\usepackage[backref=page]{hyperref}

\hypersetup{urlcolor=blue, citecolor=red}

\usepackage{hyperref}

\let\Re=\undefined\DeclareMathOperator*{\Re}{Re}
\let\Im=\undefined\DeclareMathOperator*{\Im}{Im}

\newcommand{\R}{\mathbb{R}}
\newcommand{\C}{\mathbb{C}}

\newcommand{\Z}{\mathbb{Z}}

\newcommand{\HH}{\mathcal{H}}

\newcommand\A{\bf A}

\newcommand\D{\mathcal{D}}

\newtheorem{theorem}{Theorem}[section]

\newtheorem{lemma}[theorem]{Lemma}

\newtheorem{proposition}[theorem]{Proposition}

\theoremstyle{definition}
\newtheorem{definition}[theorem]{Definition}
\newtheorem{remark}[theorem]{Remark}

\makeatletter
\newcommand{\Extend}[5]{\ext@arrow0099{\arrowfill@#1#2#3}{#4}{#5}}
\makeatother

\begin{document}
\title[Dirac equation in magnetic fields]{Decay estimates for massive Dirac equation in a constant magnetic field}

\author{Zhiqing Yin}
\address{Department of Mathematics, Beijing Institute of Technology, Beijing 100081;}
\email{zhiqingyin@bit.edu.cn}

\begin{abstract}
We study the deacy and Strichartz estimates for the massive Dirac Hamiltonian in a constant magnetic fields in $\R_t\times\R^2_x$:
\begin{equation*}
\begin{cases}
 i\partial_tu(t,x)-\mathcal{D}_Au(t,x)=0,\\
 u(0,x)=f,
 \end{cases}
\end{equation*}
where $\mathcal{D}_A=-i{\bf \sigma}\cdot (\nabla-i{\bf A}(x))+\sigma_3m$ with $m\geq0$ being the mass and $\sigma_i$ being the Dirac matrices and the potential
${\bf A}(x)=\frac{B_0}{2}(-x_2,x_1),\,B_0>0$.
In particular, we show the $L^1(\R^2)\to L^\infty(\R^2)$ type micro-localized decay estimates, for any finite time $T>0$, there exists a constant $C_T$ such that
\begin{equation*}
\begin{split}
\Big\|e^{it\mathcal{D}_{A}}&\varphi(2^{-j}|\mathcal{D}_{A}|)f(x)\Big\|_{[L^\infty(\R^2)]^2}\\&
\leq C_T 2^{2j}(1+2^{j}|t|)^{-\frac12}
\|\varphi(2^{-j}|\mathcal{D}_{A}|)f\|_{[L^1{(\R^2)]^2}}, \quad |t|\leq T,
\end{split}
\end{equation*}
 and we further prove
the local-in-time Strichartz estimates for the Dirac equations with this unbounded potential.
\end{abstract}

\maketitle

\begin{center}
 \begin{minipage}{120mm}
   { \small {\bf Key Words:  Decay estimates, Strichartz estimates,  Dirac equation,  constant magnetic field}
      {}
   }\\
    { \small {\bf AMS Classification:}
      { 42B37, 35Q40.}
      }
 \end{minipage}
 \end{center}

\section{Introduction and main results}

\subsection{The Motivations}
We are interested in the model of dispersive equations in the electromagnetic fields. The electromagnetic phenomena play a fundamental role in quantum mechanics
and the electromagnetic Schr\"odinger Hamiltonian reads as
\begin{equation}\label{H-AV}
H_{A, V}=-(\nabla-i{\bf A}(x))^2+V(x)=\nabla_A^2+V(x),\quad \text{in}\quad L^2(\R^n; \C)
\end{equation}
where $\nabla_A:=i\nabla+{\bf A}(x)$ is the magnetic gradient and the electric scalar potential $V: \R^n\to \R$ and the magnetic vector potential
\begin{equation}
{\bf A}(x)=(A^1(x),\ldots, A^n(x)): \, \R^n\to \R^n
\end{equation}satisfies the Coulomb gauge condition
\begin{equation}\label{div0}
\mathrm{div}\, A=0.
\end{equation}
In  three dimensions, the magnetic vector potential $A$ produces the magnetic field $B$ which is given by
\begin{equation}\label{B-3}
B(x)=\mathrm{curl} (A)=\nabla\times A(x).
\end{equation}
In general dimension $n\geq2$, $B$ should be regards as a the matrix-valued field $B:\R^n\to \mathcal{M}_
{n\times n}(\R)$ given by
\begin{equation}\label{B-n}
B:=DA-DA^t,\quad B_{ij}=\frac{\partial A^i}{\partial x_j}-\frac{\partial A^j}{\partial x_i}.
\end{equation}
The Schr\"odinger operators with electromagnetic potentials have been extensively studied from the aspects of spectral and scattering theory. Avron-Herbest-Simon \cite{AHS1,AHS2,AHS3} and Reed-Simon \cite{RS} have discussed many important physical potentials (in particular the constant magnetic field and the Coulomb electric potential). However, the Hamiltonian \eqref{H-AV} can not explain finer electromagnetic effects very well due to the lack of an inner structure of electrons, for example, the \emph{spin}.\vspace{0.2cm}

For a clear understanding of finer electromagnetic effects, we give a more detailed analysis of the Dirac operator in $[L^2(\R^n;\C^N)]^2$ describing a \emph{spin-$\frac12$} charged particle under the influence of the constant magnetic fields, where $N=2^{\lceil(n+1)/2\rceil}$. It is a standard way to demonstrate its interaction with a particle by replacing the derivatives $\partial_k$ with their covariant counterpart $\partial_k-iA^k$ as follows:
\begin{equation}\label{D-AV}
\begin{split}
\mathcal{D}^m_{A, V}&=\begin{cases}-i\sum_{k=1}^3\alpha_k(\partial_k-iA^k(x))+\beta m+V(x)M_{4\times 4}, &n=3,\\
-i\sum_{k=1}^2\sigma_k(\partial_k-iA^k)+\sigma_3m+V(x)M_{2\times2},&n=2,
\end{cases}
\end{split}
\end{equation}
where ${\bf A}=(A^1(x),...,A^n(x)):\R^n\rightarrow\R^n$, $M_{N\times N}$ is a $N\times N$ complex matrix on $\R^n$, ${\bf\alpha}=(\alpha_1,\alpha_2, \alpha_3)$ and $\beta$ are $N\times N$ Dirac matrices
which satisfy the following \emph{canonical anticommutation relations}
\begin{equation}\label{alpha-r}
\begin{split}
\alpha_j\alpha_k+\alpha_k\alpha_j&=2\delta_{jk} I_{N},\quad 1\leq j,k\leq n,\\
\alpha_j\beta+\beta\alpha_j&=0,\quad j=1,2,\cdots,n\\
\beta^2&=I_N.
\end{split}
\end{equation}
Here $I_{N}$ is the $N\times N$ identity matrix and $\delta_{jk}$ denotes the Kronecker symbol ($\delta_{jk}=1$ if $j=k$; $\delta_{jk}=0$ if $j\neq k$).\vspace{0.2cm}

For $n=2$, there are at most three linearly independent anticommuting matrices: the Pauli matrices
\begin{equation}
\sigma_1=\left(\begin{array}{cc}0 & 1 \\1 & 0\end{array}\right),\quad
\sigma_2=\left(\begin{array}{cc}0 &-i \\i & 0\end{array}\right),\quad
\sigma_3=\left(\begin{array}{cc}1 & 0\\0 & -1\end{array}\right).
\end{equation}

Therefore, when $n=2$, the magnetic Dirac operator \eqref{D-AV} becomes
\begin{equation}\label{D-AV2}
\begin{split}
&\mathcal{D}^m_{A, V}=-i\sum_{k=1}^2\sigma_k(\partial_k-iA^k(x))+\sigma_3 m+V(x)M_{2\times 2}\\
  &=-\begin{pmatrix}
    -m & (i\partial_{1}+A^{1}-i(i \partial_{2}+A^{2})\\
    (i\partial_{1}+A^{1}+i(i \partial_{2}+A^{2}) &  m
  \end{pmatrix}+V(x)M_{2\times 2}.
\end{split}
\end{equation}
The system associated with \eqref{D-AV} is called massless if $m=0$, otherwise it is called massive. Moreover, the canonical anticommutation relations implies the free Dirac operator satisfies
 \begin{align*}
 \D^2=-\Delta I_N.
 \end{align*}
Then the Dirac equation can be listed within a diagonal system of wave equations
 \begin{align*}
 (i\partial_t+\D+m\sigma_3)(i\partial_t-\D-m\sigma_3) =(-\partial_{tt}+\Delta-m^2)I_{N}.
 \end{align*}
When a magnetic potential participates in, an electromagnetic wave equation will be a naturally result from the above reduction (discussed in \cite[Chapter 4]{LL}).
In the two dimension, there are two interesting magnetic fields in physics.

 $\bullet$ The first one is the magnetic potential ${\bf A}_B(x)=(A^1(x),A^2(x))$ which is given by
the {\em Aharonov-Bohm magnetic field} (see \cite{AB59}), that is, for magnetic flux $\alpha\in\R$

\begin{equation}\label{AB-potential}
{\bf A}_B(x)=\alpha\Big(-\frac{x_2}{|x|^2},\frac{x_1}{|x|^2}\Big),\quad x=(x_1,x_2)\in\mathbb{R}^2\setminus\{0\}.
\end{equation}
The Dirac equation in Aharonov-Bohm magnetic fields has been studied by F. Cacciafesta, P. D'Ancona, Zhang and the author \cite{CDYZ},
in which we prove the weighted dispersive and Strichartz estimates.

$\bullet$ The second magnetic potential ${\bf A}(x)=\vec{A}(x)=(A^1(x),A^2(x))$ is given by
the {\em constant magnetic field} (also called as uniform magnetic field), that is,
\begin{equation}\label{A-hmf}
{\bf A}(x)=\frac{B_0}{2}(-x_2,x_1),\quad B_0>0,
\end{equation}
 which is quite different from the Aharonov-Bohm magnetic fields studied in \cite{CDYZ}. We stress here the model is on the plane $\R^2$, where the magnetic field $B$ is given by
\begin{equation}\label{B-n}
B(x):=DA-DA^t,\quad B_{ij}=\frac{\partial A^i}{\partial x_j}-\frac{\partial A^j}{\partial x_i},\quad i,j=1,2.
\end{equation}
As in \cite{FV}, we define the vector field $B_\tau: \R^2\to \R^2$ as follows:
\begin{equation}
B_{\tau}=\frac x{|x|} B.
\end{equation}
Observe that $B_{\tau}\cdot x=0$, hence $B_{\tau}$ is a tangential vector field.
Obviously, the potential ${\bf A}(x)$ is unbounded at infinity and the magnetic filed $B_\tau(x)\neq 0$ leads to a trapped well.
So there is no hope to prove global-in-time dispersion estimates. \vspace{0.2cm}

In the present paper, we aim to study the decay estimates and Strichartz estimates of 2-D Dirac equation under the uniform magnetic fields, which give rise to very interesting phenomena and strange spectral properties of Dirac operators.
From the physical point of view the Dirac equation describes the spin of a particle and its magnetic moment in a completely natural way due to the influence of an external potential in a relativistically invariant manner.\vspace{0.2cm}

The decay estimates and Strichartz estimates of the dispersive equations has a long history due to their significance in analysis and PDEs fields.
Especially the dispersive equations with the Aharonov-Bohm potential, as a diffraction physic model and scaling critical purely magnetic potential, has attracted more and more people to study from mathematic viewpoint.
In particular, we refer to \cite{GV, KT} for a well established theory of the constant coefficient Schr\"odinger and the wave equations.
We refer to \cite{cacfan2, DF, DFVV, EGS1, EGS2, F, FFFP, FFFP1, GYZZ22, S, CYZ, CS, FZZ} for the Schr\"odinger, wave and Klein-Gordon equations with electromagnetic potentials in mathematic and physical fields. However,
for the Dirac equation,  the picture is far from complete. To the best of our knowledge, due to the rich algebraic structure of the Dirac equation, there only few available results in this direction are provided in \cite{cacser,cacserzha} proved the local smoothing and Strichartz estimates in the cases of the Coulomb potential perturbation and \cite{cacfan} for the Aharonov-Bohm magnetic field. Very recently, Danesi \cite{D} has established the Strichartz estimates
 for Dirac-Coulomb equation with a loss of angular regularity. We also mention that our work is very different from the works in \cite{EG, EGG, EGT},
for the perturbation approach there breaks down for our model since the magnetic potential is unbounded. \vspace{0.2cm}

\subsection{The main results}In this paper, we focus on the massive Dirac operator with the uniform (constant) magnetic field, that is,
\begin{equation}\label{D-A2}
\begin{split}
\mathcal{D}_A
  &=-\begin{pmatrix}
    -m & (i\partial_{1}+A^{1}-i(i \partial_{2}+A^{2})\\
    (i\partial_{1}+A^{1}+i(i \partial_{2}+A^{2}) &  m
  \end{pmatrix}\\
  &:=-\begin{pmatrix}
  -m&\D_+\\ \D_-&m\end{pmatrix},
\end{split}
\end{equation}
where ${\bf A}(x)=\vec{A}(x)=(A^1(x),A^2(x))$ is in \eqref{A-hmf}.
The purpose of this paper is to study the dispersion behavior of the solution of the Cauchy problem
\begin{equation}\label{eq:Dirac}
\begin{cases}
\displaystyle
 i\partial_tu=\mathcal{D}_Au,\quad u(t,x):\mathbb{R}_t\times\mathbb{R}_x^2\rightarrow\mathbb{C}^{2},\\
u(0,x)=f(x),
\end{cases}
\end{equation}
where $u(t,x)$ are vector-valued wavefunctions
\begin{equation}
u(t,x) =\begin{pmatrix}
    u_1(t,x) \\
      u_{2}(t,x)
    \end{pmatrix}\in \C^2.
\end{equation}


Before stating our main results, let us introduce some preliminary notations.
We define the magnetic Besov spaces as follows.  Let $\varphi\in C_c^\infty(\mathbb{R}\setminus\{0\})$, with $0\leq\varphi\leq 1$, $\text{supp}\,\varphi\subset[1/2,1]$, and
\begin{equation}\label{LP-dp}
\sum_{j\in\Z}\varphi(2^{-j}\lambda)=1,\quad \varphi_j(\lambda):=\varphi(2^{-j}\lambda), \, j\in\Z,\quad \phi_0(\lambda):=\sum_{j\leq0}\varphi(2^{-j}\lambda).
\end{equation}

\begin{definition}[ Magnetic Besov spaces]\label{def:besov} For $s\in\R$ and $1\leq p,r<\infty$, the homogeneous Besov norm of $\|\cdot\|_{\dot{\mathcal{B}}^s_{p,r}(\R^2)}$ is defined by
\begin{equation}\label{Besov}
\begin{split}
\|f\|_{{\mathcal{B}}^s_{p,r}(\R^2)}&=\Big(\|\phi_0(\sqrt{H_{B_0}})f\|_{L^p(\R^2)}^r+\sum_{j\geq 1}2^{jsr}\|\varphi_j(\sqrt{H_{B_0}})f\|_{L^p(\R^2)}^r\Big)^{1/r},
\end{split}
\end{equation}
where $H_{B_0}=(i\nabla+{\bf A}(x))^2$. In particular, $p=r=2$, we denote the Sobolev norm
\begin{equation}\label{Sobolev1}
\begin{split}
\|f\|_{ \dot{\HH}^s_{{\bf A}}(\R^2)}:=\|f\|_{\dot{\mathcal{B}}^s_{2,2}(\R^2)},\quad \|f\|_{ \HH^s_{\bf A}(\R^2)}:=\|(H_{B_0}+m^2+ B_0)^{\frac s2}f\|_{L^2(\R^2)}=\|f\|_{\mathcal{B}^s_{2,2}(\R^2)}.
\end{split}
\end{equation}

\end{definition}

\begin{remark}  Alternatively,  the Sobolev space is defined by
\begin{align}\label{def:sobolev}
 \dot \HH^{s}_{\bf A}(\R^2):= H_{B_0}^{-\frac s2}L^2(\R^2),
\end{align}
with norm
\begin{equation}\label{Sobolev2}
\begin{split}
\|f\|_{\dot \HH^s_{\bf A}(\R^2)}&:=\big\|H_{B_0}^{\frac s2}f\big\|_{L^2(\R^2)}.
\end{split}
\end{equation}
The two norms in \eqref{Sobolev1} and \eqref{Sobolev2} are equivalent, see Proposition \ref{prop:E-S-B} below.
\end{remark}

\begin{definition}\label{ad-pair}
The pair $(q,p)\in [2,\infty]\times [2,\infty)$ is said to be admissible,   if $(q,p)$ satisfies
\begin{equation}\label{adm}
\frac{2}q\leq\frac{1}2-\frac1p.
\end{equation}
For $s\in\R$, we denote $(q,p)\in \Lambda^W_{s}$ if $(q,p)$ is admissible and satisfies
\begin{equation}\label{scaling}
\frac1q+\frac {2}p=1-s.
\end{equation}

\end{definition}

Now we state our main result.

\begin{theorem}\label{thm:dispersive} Let ${\bf A}(x)$ be in \eqref{A-hmf} and define the Landau Hamiltonian $H_{B_0}$ by
\begin{equation}\label{op:HB}
H_{B_0}=(i\nabla+{\bf A}(x))^2.
\end{equation}
Let $u(t,x)$ be the solution of \eqref{eq:Dirac} and assume that the initial data $f$ satisfies $\varphi(2^{-j}\sqrt{H_{B_0}})f(x)\in  [L^1(\R^2)]^2$ where $\varphi$ is in \eqref{LP-dp}.  For any $t\in I:=[0,T]$ with any finite $T$,
then there exists a constant $C_T$ depending on $T$ such that
\begin{equation}\label{dis-w}
\begin{split}
\Big\|&\varphi(2^{-j}\sqrt{H_{B_0}}) u(t,x)\Big\|_{[L^\infty(\R^2)]^2}\\&
\leq C_T 2^{2j}(1+2^{j}|t|)^{-\frac12}
\|\varphi(2^{-j}\sqrt{H_{B_0}})f\|_{[L^1{(\R^2)]^2}}, \quad t\in I.
\end{split}
\end{equation}
Let $u(t,x)$ be the solution of \eqref{eq:Dirac} with $m>0$ and the initial data $f\in [\dot\HH_{\bf A}^s(\R^2)]^2$,  then the Strichartz estimates hold
\begin{equation}\label{stri:w}
\begin{split}
&\|u(t,x)\|_{[L^q(I;L^p(\R^2))]^2}\leq C_T \|f\|_{ [\HH_{\bf A}^s(\R^2)]^2},
\end{split}
\end{equation}
 where the pair $(q,p)\in \Lambda^W_{s}$ with $0\leq s\leq1$.
 
\end{theorem}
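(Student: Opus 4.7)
The plan is to square the Dirac operator and reduce to a half-Klein--Gordon flow governed by (a spin-shifted) Landau Hamiltonian. Using the Pauli identity $(\sigma\!\cdot\! Y)^2 = |Y|^2 I_2 + i\sigma_3[Y_1,Y_2]$ with $Y_k=\partial_k - iA^k$, the curl commutator $[Y_1,Y_2]=-iB_0$, and $\{\sigma_3,\sigma_j\}=0$ for $j=1,2$, a direct computation gives
\[
\mathcal{D}_A^2 \;=\; \mathcal{K}\;:=\;H_{B_0} + m^2 - B_0\sigma_3,
\]
which is diagonal in the spinor components and acts on each by the scalar operator $H_{B_0}+m^2\mp B_0 \geq m^2 > 0$. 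Functional calculus on $\mathcal{D}_A$ then gives the Euler-type identity
\[
e^{it\mathcal{D}_A} \;=\; \cos\!\bigl(t\sqrt{\mathcal{K}}\bigr) + i\,\mathcal{D}_A\,\frac{\sin\!\bigl(t\sqrt{\mathcal{K}}\bigr)}{\sqrt{\mathcal{K}}}.
\]
The operator $\mathcal{D}_A\mathcal{K}^{-1/2}$ is spectrally the sign of $\mathcal{D}_A$, hence $L^2$-unitary, so the dispersive analysis of $e^{it\mathcal{D}_A}$ reduces to that of the two half-wave propagators $e^{\pm it\sqrt{\mathcal{K}}}$, both of which commute with every spectral multiplier of $H_{B_0}$ (since $H_{B_0}$ and $\sigma_3$ commute).

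The heart of the argument is the local-in-time, micro-localized dispersive bound
\[
\bigl\|e^{\pm it\sqrt{\mathcal{K}}}\,\varphi(2^{-j}\sqrt{H_{B_0}})\bigr\|_{L^1(\R^2)\to L^\infty(\R^2)} \;\leq\; C_T\,2^{2j}(1+2^j|t|)^{-1/2},\qquad |t|\leq T.
\]
I would prove it by exploiting the explicit Mehler formula for the Landau heat kernel $e^{-sH_{B_0}}(x,y)$ together with the subordination identity relating $e^{-s\mathcal{K}}$ to $\cos(t\sqrt{\mathcal{K}})$ and $\sin(t\sqrt{\mathcal{K}})/\sqrt{\mathcal{K}}$. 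Inserting the Littlewood--Paley cutoff $\varphi(2^{-j}\sqrt{H_{B_0}})$ on the spectral side and performing a stationary-phase/complex-deformation analysis of the resulting oscillatory integral yields the claimed $2^{2j}(1+2^j|t|)^{-1/2}$ profile, matching the 2D wave dispersion at the dyadic scale $2^j$. The restriction $|t|\leq T$ is essential: since $H_{B_0}$ has purely discrete Landau spectrum, the wave propagator is almost periodic in $t$ and no global-in-time dispersion can hold.

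Once the micro-localized dispersive estimate is in hand, the Strichartz estimates follow by the now-standard route. Interpolating with the $L^2$ isometry $\|e^{it\mathcal{D}_A}\|_{L^2\to L^2}=1$ gives
\[
\|e^{it\mathcal{D}_A}\varphi_j f\|_{L^p} \;\lesssim\; 2^{2j(1-\frac2p)}(1+2^j|t|)^{-\frac12(1-\frac2p)}\|\varphi_j f\|_{L^{p'}}.
\]
The Keel--Tao abstract $TT^*$ argument with dispersion exponent $\sigma=1/2$ reproduces exactly the admissibility $\frac{2}{q}\leq\frac{1}{2}-\frac{1}{p}$ of Definition \ref{ad-pair} and yields, at each dyadic scale, $\|e^{it\mathcal{D}_A}\varphi_j f\|_{L^q_I L^p_x}\lesssim C_T\,2^{j(1-s)}\|\varphi_j f\|_{L^2}$ for $(q,p)\in\Lambda^W_s$, where the exponent $1-s$ is forced by the scaling identity $\frac{1}{q}+\frac{2}{p}=1-s$. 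Square-summing in $j$ via $L^2$-orthogonality of the Littlewood--Paley pieces and the Besov/Sobolev norm equivalence of Proposition \ref{prop:E-S-B} converts this into $\|u\|_{L^q_I L^p_x}\leq C_T\|f\|_{\HH^s_{\bf A}}$ for $0\le s\le 1$.

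The principal obstacle is the second step: proving the short-time dispersive estimate for $e^{\pm it\sqrt{\mathcal{K}}}$ localized at frequency $2^j$ in spite of the unboundedness of $\mathbf{A}$ and the discreteness of the Landau spectrum. Standard stationary-phase/WKB analysis is unavailable, so one must work directly with the Mehler kernel and keep tight control over the phase extracted via subordination; the frequency localization in $H_{B_0}$ is precisely what pins down the critical point of this phase and produces the sharp $(1+2^j|t|)^{-1/2}$ decay on any finite time interval.
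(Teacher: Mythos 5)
Your proposal is correct and follows essentially the same route as the paper: squaring $\mathcal{D}_A$ to the diagonal operators $H_{B_0}+m^2\mp B_0$, proving the micro-localized $L^1\to L^\infty$ bound via the subordination formula combined with the Mehler kernel (the paper bounds the resulting $s$-integral by the Schr\"odinger dispersive estimate $\|e^{i\tau H_{B_0}}\|_{L^1\to L^\infty}\lesssim|\sin(\tau B_0)|^{-1}$ on the window $2^{-j}t\leq \pi/(8B_0)$, plus Bernstein for $2^jt\lesssim 1$), and then applying the frequency-localized Keel--Tao argument with $\sigma=1/2$, $h=2^{-j}$ and square-summing via the Littlewood--Paley/Besov--Sobolev equivalence. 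The only cosmetic difference is that you package the solution via the Euler identity with the unitary sign operator $\mathcal{D}_A\mathcal{K}^{-1/2}$, whereas the paper writes out the two second-order Cauchy problems and controls the $\mathcal{D}_A f$ datum through its Lemma \ref{lem:D-norm}.
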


We sketch of the proof here. In contrast to \cite{CDYZ}, due to the fact that ${\bf A}(x)\in  L^2_{\text{loc}}(\R^2;\R^2)$, our idea here use the spinorial null structure to relate the Dirac equation to the Klein-Gordon models.
 By squaring the magnetic perturbed Dirac operator, we obtain
\begin{equation}\label{equ:WA}
(\mathcal{D}_A)^2=\begin{pmatrix}
    H_{B_0}+m^2-B_0 & 0\\
   0 & H_{B_0}+m^2+B_0
  \end{pmatrix}
\end{equation}
where
\begin{equation}
H_{B_0}=(i\nabla+{\bf A}(x))^2.
\end{equation}
If $u(t,x)$ solves \eqref{eq:Dirac},  since $(i\partial_t+\mathcal{D}_A)(i\partial_t-\mathcal{D}_A) =-\partial_t^2-(\mathcal{D}_A)^2$, then
$u(t,x)=\Big(\begin{smallmatrix}
    u^1(t,x) \\
      u^{2}(t,x)
    \end{smallmatrix}\Big)$ solves the equations
\begin{equation}\label{equ:W1}
\begin{cases}
\partial_{tt} u^1(t,x)+\big(H_{B_0}+m^2-B_0\big) u^1(t,x)=0,\\
u^1(0,x)=f^+(x),\\
\partial_t u^1(0, x)=i\mathcal{D}_+ f^-(x)-mf^+,
\end{cases}
\end{equation}
and
\begin{equation}\label{equ:W2}
\begin{cases}
\partial_{tt} u^2(t,x)+\big(H_{B_0}+m^2+B_0\big) u^2(t,x)=0,\\
u^2(0,x)=f^-(x),\\
\partial_t u^2(0, x)=i\mathcal{D}_- f^+(x)+mf^-.
\end{cases}
\end{equation}
Therefore we are reduce to study two wave equations, in which the component $H_{B_0}+m^2-B_0$ in \eqref{equ:WA} denotes the restriction of the Pauli operator on the spin-up subspace, while $H_{B_0}+m^2+B_0$ stands for the restriction on the
spin-down subspace.

We typically consider the above equations as an initial value problem in the Hilbert space
 \begin{align}
 \HH_{\bf A}^1(\R^2;\C)=\{f\in L^2{(\R^2;\C)}:(i\nabla+{\bf A})f\in L^2(\R^2;\C^2) \}
 \end{align}
and
 \begin{align}
 \HH_{\bf A}^2(\R^2;\C)=\{f\in H_A^1(\R^2;\C):(i\nabla+{\bf A})^2f\in L^2(\R^2;\C^2) \}
 \end{align}
If ${\bf A}\in L^2_{loc}(\R^2;\R^2)$ and div${\bf A}\in L^2(\R^2)$, then $(i\nabla+\bf A)^2$ is essentially self-adjoint on $C^\infty_0(\R^2)$ by the Leinfelder-Simader theorem \cite{LS}. The Pauli operator $P(A)$ is also essentially self-adjoint on $C^\infty_0(\R^2;\C^2)$ under mild regularity assumptions on $B$. In physics the Pauli and Dirac operators represent the non-relativistic and relativistic quantum mechanical Hamiltonians of a spin $1/2$ particle confined to a plane and interacting with the magnetic field $(0, 0, B)$.
\begin{remark}
The operator $H_{B_0}$ is self-adjoint and
the eigenvalues of $H_{B_0}$ are given by
\begin{equation*}
\lambda_{k}=(2k+1)B_0,\quad k\in\mathbb{N}.
\end{equation*}
The two operators $H_{B_0}+m^2\mp B_0$ are positive operators for any nonnegative mass $m\geq0$.
Therefore, the systems \eqref{equ:W1} and \eqref{equ:W2} can be regards as Klein-Gordon type equations respectively, hence we need to study
the propagator
$$e^{it\sqrt{H_{B_0}+m^2\mp B_0}},$$
which is much difficult than the Pauli equation considered in \cite{KL}. To this end, as done in our previous paper \cite{WZZ2}, we use the Bernstein inequality to deal with the low frequency.
For the high frequency, we use the classical subordination formula
\begin{equation*}
e^{-y\sqrt{H_{B_0}+m^2\mp B_0}}=\frac{y}{2\sqrt{\pi}}\int_0^\infty e^{-s(H_{B_0}+m^2\mp B_0)}e^{-\frac{y^2}{4s}}s^{-\frac{3}{2}}ds,\quad y>0,
\end{equation*}
which provides  a connecting bridge between the Schr\"odinger propagator and the half-wave propagator.
\end{remark}
\vspace{0.1cm}

The paper is organized as follows. In Section \ref{sec:pre}, as a preliminary step, we recall the self-adjoint extension of $\D_A$ and the spectrum of the operator $H_{B_0}$, and prove the equivalence between Sobolev norm and a special Besov norm. In Section \ref{sec:pre}, we construct the heat kernel and prove the Gaussian upper bounds and we modify the subordination formula about $e^{it\sqrt{H_{B_0}+m^2\mp B_0}}$. In Section \ref{sec:tool}, we prove the Bernstein inequalities and the square function inequality
by using the heat kernel estimates.  Finally, in Section \ref{sec:decay} and Section \ref{sec:str}, we prove the dispersive estimate \eqref{dis-w} and the Strichartz estimate \eqref{stri:w} in Theorem \ref{thm:dispersive} respectively.
\vspace{0.2cm}

\section{the spectral theory of $\D_A$ and $H_{B_0}$} \label{sec:pre}

In this section, we first briefly prove that $\D_A$ is essentially self-adjoint and collect some harmonic analysis tools including Sobolev norm for the operator $H_{B_0}$
from \cite{WZZ1, WZZ2} by setting $\alpha=0$ and Dirac operator $\D_A$.

\subsection{Self-adjoint extension} We can use von Neumann theory to study the self-adjoint extension of $\D_A$. In fact, the operator $\D_A$ is essential self-adjoint due to
that the dimension of deficiency space $N_{\pm}=\ker(d_{k}^{*}\mp i)$ vanishes, where $d_{k}^*$ is the adjoint operator of $d_{k}$ and
\begin{equation*}
  d_{k}:=
  \begin{pmatrix}
    m & i(\partial_r+\frac{k+1}r-\frac {B_0}2 r) \\
    i(\partial_r-\frac{k}r+\frac {B_0}2 r) & -m
  \end{pmatrix}.
\end{equation*}
We refer to \cite[Section 2]{CDYZ}. For self-contained, we provide the proof of  the conclusion $N_{\pm}=\ker(d_{k}^{*}\mp i)=\emptyset$.
First, we notice the orthogonal decomposition
$\big[L^{2}(\mathbb{R}^{2})\big]^{2}$
\begin{equation}\label{eq:spDir}
  (L^{2}(\mathbb{R}^{2}))^{2}=
  \bigoplus_{k\in \mathbb{Z}}
  L^{2}(rdr)^{2}\otimes h_{k}(\mathbb{S}^{1}),
\end{equation}
where $h_{k}(\mathbb{S}^{1})$ is the one dimensional space
\begin{equation*}
  h_{k}=
  h_{k}(\mathbb{S}^{1})=
  \left[
    \begin{pmatrix}
      e^{ik \theta} \\
      e^{i(k+1)\theta}
    \end{pmatrix}
  \right]=
  \left\{
    c\begin{pmatrix}
      e^{ik \theta} \\
      e^{i(k+1)\theta}
    \end{pmatrix}:
    c\in \mathbb{C}
  \right\}.
\end{equation*}
Then, for any $f=\Big(\begin{smallmatrix} \phi \\ \psi \end{smallmatrix}\Big)
  \in \big[L^{2}(\mathbb{R}^{2})\big]^{2}$, we have
\begin{equation*}
  \phi=\sum_{k\in \mathbb{Z}}\phi _{k}(r)e^{ik \theta},
  \qquad
  \psi=\sum_{k\in \mathbb{Z}}\psi _{k}(r)e^{i(k+1)\theta}.
\end{equation*}
Hence, we obtain
\begin{equation*}
\begin{split}
  \mathcal{D}_{A}
  \sum_{k}
  \begin{pmatrix}
    e^{ik \theta}\ \phi_{k}(r) \\
    e^{i(k+1)\theta}\ \psi_{k}(r)
  \end{pmatrix}
 & = \sum_{k}d_k
  \begin{pmatrix}
    e^{ik \theta}\ \phi_{k}(r) \\
    e^{i(k+1)\theta}\ \psi_{k}(r)
  \end{pmatrix}\\
  &=
  \sum_{k}
  \begin{pmatrix}
    e^{ik \theta}[m\phi_k(r)+i\ (\partial_r+\frac{k+1}r-\frac {B_0}2 r)\psi_{k}(r)] \\
    e^{i(k+1)\theta}[-m\psi_k(r)+i\ (\partial_r-\frac{k}r+\frac {B_0}2 r)\phi_{k}(r)]
  \end{pmatrix}.
  \end{split}
\end{equation*}

To ensure the radial Dirac operator $d_k$ is symmetric and well defined, the domain is restricted on $C^\infty_0((0,\infty),rdr)\subset L^2((0,\infty),rdr)$.
It is easy to verify that $d_k$ is symmetric. In \cite{FP} (with $\kappa=0$), it implies that $d_k$ is essentially self-adjoint. In the following we give a simple proof that $d_k$ admits one-parameter self-adjoint extensions $\bar{d_k}$.
To see that the operator $\D_A$ is essentially self-adjoint, from von Neumann theory, we need to conclude the deficiency subspaces $N_{\pm}$ is empty.

\textsc{The closure $\overline{d_{k}}$}. The operator $d_k$ defined on $D(d_k)=[C^\infty_0(\R_+)]^2$, a dense subspace of $[L^2(\R_+,rdr)]^2$. For $f=\big(\begin{smallmatrix}\phi \\ \psi\end{smallmatrix}\big)\in \big[C_{0}^{\infty}((0,+\infty))\big]^{2}$, it is easy to seen that
\begin{equation}
 \begin{aligned}
 \|d_k f\|^2_{(L^2(rdr))^2}=\|f'\|_{L^2(rdr)}&+\Big\|\Big( \frac{k}r+\frac {B_0}2 r \Big)\phi\Big\|_{L^2(rdr)}+\|m\phi\|_{L^2(rdr)}\\&+
 \Big\|\Big(\frac{k+1}r+\frac{B_0}2 r\Big)\psi\Big\|_{L^2(rdr)}+\|m\psi\|_{L^2(rdr)}.
 \end{aligned}
 \end{equation}
From \cite[Appendix B]{FP}, we can easily get the functions in $D(\overline{d_{k}})$ are continuous near the origin and vanishing for $r\rightarrow 0^+$, that is
 \begin{align}
 D(\overline{d_k})=\{\phi\in L^2(rdr)| \phi', \phi/r \in L^2(rdr), \phi\in C([0,\infty)), \lim_{r\rightarrow0^+}\phi(0)=0 \}^2.
 \end{align}

\textsc{The adjoint $d_{k}^*$}. Let us recall the definition of domain of $d_{k}^*$,
 for $f=(\begin{smallmatrix}\phi \\ \psi\end{smallmatrix})
  \in D(d^{*}_{k})$
iff $\exists g\in L^{2}(rdr)^{2}$ such that
\begin{equation}\label{eq:adj}
  \langle f,d_{k}u\rangle=
  \langle g,u\rangle
  \qquad
  \forall u\in D(d_k).
\end{equation}
The adjoint $d_{k}^*$ is defined by $g=d_{k}^*f$. The weak derivative of $f$ is locally in $[L^{2}(rdr)]^{2}$ away from the origin. Making use of the Sobolev's lemma \cite{RS} and integrations by parts in \eqref{eq:adj}, one conclude $d_k^*$ acts as same expression as $d_k$.

\textsc{Deficiency indices of $\overline{d}_{k}$}. Since $\overline{d}_{k}$ is closed, densely defined and symmetric, von Neumann's theory applies. Now we compute the dimension of deficiency space $N_{+}=\ker(d_{k}^{*}- i)$ first. For $f=\big(\begin{smallmatrix} \phi \\ \psi \end{smallmatrix}\big)\in D(\ker(d_{k}^{*}))$,
 \begin{align}
 d_{k}^{*}f= i f,
 \end{align}
the coupled differential equations
 \begin{align}
 \begin{cases}
 i(\partial_r+\frac{k+1}r-\frac {B_0}2 r)\psi= (i-m)\phi,\\
 i(\partial_r-\frac{k}r+\frac {B_0}2 r)\phi= (i+m)\psi,
 \end{cases}
 \end{align}
this implies
 \begin{align}
 \begin{cases}
 [\partial_r^2+\frac1r\partial_r-(\frac{k^2}{r^2}-B_0(k+1)+\frac{B_0^2}{4}r^2)+m^2+1]\phi=0,\\
 [\partial_r^2+\frac1r\partial_r-(\frac{(k+1)^2}{r^2}-B_0k+\frac{B_0^2}{4}r^2)+m^2+1]\psi=0.\\
 \end{cases}
 \end{align}
One can solve this equation by
 \begin{align}
 \phi=e^{-\frac{B_0r^2}4}r^{|k|}F_1(\frac {B_0}2 r^2),\quad \psi=e^{-\frac{B_0r^2}4}r^{|k+1|}F_2(\frac {B_0}2r^2),
 \end{align}
since Kummer's equations (see \cite{Abra})
 \begin{align*}
 \begin{cases}
 \frac {B_0}2r^2\frac{d^2F_1(\frac {B_0}2r^2)}{d(r^2)^2}+[(|k|+1)-\frac {B_0}2r^2)]\frac{dF_1(\frac {B_0}2r^2)}{d(r^2)}+(\frac{k-|k|}2-\frac{m^2+1}{2B_0}) F_1(\frac {B_0}2r^2)=0,\\
 \frac {B_0}2r^2\frac{d^2F_2(\frac {B_0}2r^2)}{d(r^2)^2}+[(|k+1|+1)-\frac {B_0}2r^2)]\frac{dF_2(\frac {B_0}2r^2)}{d(r^2)}+(\frac{k+1-|k+1|-2}2-\frac{m^2+1}{2B_0}) F_2(\frac {B_0}2r^2)=0.
 \end{cases}
 \end{align*}
The first equation has two linearly independent solutions: $M(\frac{k-|k|}2-\frac{m^2+1}{2B_0}, |k|+1,  \frac{B_0}2r^2)$ and $U(\frac{k+1-|k+1|-2}2-\frac{m^2+1}{2B_0}, |k+1|+1,  \frac{B_0}2r^2)$, where
 \begin{align}
 M(a,b,s)=\sum^\infty_{n=0}\frac{(a)_n}{(b)_n}\frac{s^n}{n!}, \quad b\neq0,-1,-2,\cdot\cdot\cdot
 \end{align}
with $(a)_n (a\in\R)$ denoting the Pochhammer's symbol
 \begin{align}
 (a)_n=\begin{cases}1,& n=0;\\
 a(a+1)(a+2)\cdot\cdot\cdot(a+n-1),&n=1,2,\cdot\cdot\cdot\end{cases}
 \end{align}
and
 \begin{align*}
 U(a,b,s)=\frac{\Gamma(1-b)}{\Gamma(a+b-1)}M(a,b,s) +\frac{\Gamma(b-1)}{\Gamma(a)}s^{1-b}M(a-b+1,2-b,s).
 \end{align*}
From \cite{Abra}, to ensure that $\phi\in L^2((0,\infty), rdr)$, one requires $0<|k|+1<2$ (see \cite[P508]{Abra}).
Similarly for the second equation, to ensure that $\psi\in L^2((0,\infty), rdr)$, one requires $0<|k+1|+1<2$.
So there is no $k\in\Z$ satisfying the two requirements, therefore we conclude that $N_+=N_-=\emptyset$, that is to say the operator $d_k$ is essentially self-adjoint and its unique self-adjoint extension is given by the closure of its graph.

\subsection{The spectrum of the operator $H_{B_0}$}
From \eqref{A-hmf}, we observe the fact that ${\bf A}(x)\in  L^2_{\text{loc}}(\R^2;\R^2)$, hence
 $H_{B_0}$ is an essentially
self-adjoint operator on $L^2(\R^2; \C)$ with domain $\HH_A^2(\R^2; \C)$. The operator $H_{B_0}$  is named after Landau operator.
Our argument is based on the known results in \cite{WZZ1, WZZ2} which we restated here.
\begin{proposition}[The spectrum for $H_{B_0}$]\label{prop:spect}
The eigenvalues of $H_{B_0}$ are given by
\begin{equation}\label{eigen-v}
\lambda_{k,\ell}=(2\ell+1+|k|)B_0+kB_0,\quad \ell,\,k\in\mathbb{Z},\, \ell\geq0,
\end{equation}
and the (finite) multiplicity of $\lambda_{k,\ell}$ is
\begin{equation*}
\#\Bigg\{j\in\mathbb{Z}:\frac{\lambda_{k,\ell}-jB_0}{2B_0}-\frac{|j|+1}{2}\in\mathbb{N}\Bigg\}.
\end{equation*}
Furthermore, let $\theta=\frac{x}{|x|}$, the corresponding eigenfunction is given by
\begin{equation}\label{eigen-f}
V_{k,\ell}(x)=|x|^{|k|}e^{-\frac{B_0 |x|^2}{4}}\, P_{k,\ell}\Bigg(\frac{B_0|x|^2}{2}\Bigg)e^{ik\theta},
\end{equation}
where  $P_{k,\ell}$ is the polynomial of degree $m$ given by
\begin{equation*}
P_{k,\ell}(r)=\sum_{n=0}^\ell\frac{(-\ell)_n}{(1+|k|)_n}\frac{r^n}{n!}
\end{equation*}
with $(a)_n$ ($a\in\R$) denoting the Pochhammer's symbol
\begin{align*}
(a)_n=
\begin{cases}
1,&n=0;\\
a(a+1)\cdots(a+n-1),&n=1,2,\cdots
\end{cases}
\end{align*}\end{proposition}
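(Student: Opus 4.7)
The plan is to diagonalize $H_{B_0}=(i\nabla+{\bf A}(x))^2$ by separation of variables in polar coordinates, reducing the eigenvalue problem on each angular mode to Kummer's confluent hypergeometric equation after extracting the expected asymptotic factors at $0$ and $\infty$. Square-integrability then forces the Kummer parameter to lie in $-\mathbb{N}$, which simultaneously produces the quantized eigenvalues \eqref{eigen-v} and the polynomial eigenfunctions \eqref{eigen-f}.

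\textbf{Reduction to a radial Kummer ODE.} Using $\operatorname{div}{\bf A}=0$ I first expand $H_{B_0}=-\Delta+2i{\bf A}\cdot\nabla+|{\bf A}|^2$ and insert ${\bf A}(x)=\frac{B_0}{2}(-x_2,x_1)$, which gives $2{\bf A}\cdot\nabla=B_0\partial_\theta$ and $|{\bf A}|^2=\tfrac{B_0^2}{4}r^2$. Expanding $f(r,\theta)=\sum_{k\in\mathbb{Z}}f_k(r)e^{ik\theta}$ decouples the angular modes and produces on each mode the radial eigenvalue problem
\[
-f_k''(r)-\frac{1}{r}f_k'(r)+\Bigl(\frac{k^2}{r^2}+\frac{B_0^2}{4}r^2-B_0 k\Bigr)f_k(r)=\lambda f_k(r).
\]
To absorb the singularity at $0$ and the Gaussian decay at $\infty$, I then substitute $f_k(r)=r^{|k|}e^{-B_0 r^2/4}F(s)$ with $s=\tfrac{B_0 r^2}{2}$; a direct computation converts the radial equation into Kummer's equation
\[
sF''(s)+(|k|+1-s)F'(s)-\alpha F(s)=0,
\]
where $\alpha$ is an explicit affine function of $\lambda$, $k$, and $B_0$.

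\textbf{Quantization, eigenfunctions, and multiplicity.} Kummer's equation has independent solutions $M(\alpha,|k|+1,s)$ and $U(\alpha,|k|+1,s)$; the $U$-branch is excluded by regularity at $r=0$, since the prefactor $r^{|k|}$ already encodes the correct behavior there. The known asymptotics $M(\alpha,b,s)\sim \Gamma(b)\Gamma(\alpha)^{-1}e^{s}s^{\alpha-b}$ as $s\to\infty$ show that $r^{|k|}e^{-B_0 r^2/4}M(\alpha,|k|+1,s)$ fails to be square-integrable on $(0,\infty)$ with respect to $r\,dr$ unless $\Gamma(\alpha)^{-1}=0$, i.e.\ $\alpha=-\ell$ for some $\ell\in\mathbb{N}$. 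Solving for $\lambda$ in terms of $\ell$, $k$, $B_0$ yields the eigenvalue formula \eqref{eigen-v}, and the polynomial $P_{k,\ell}$ is precisely the truncated Kummer series $M(-\ell,|k|+1,\cdot)$, matching \eqref{eigen-f}. The multiplicity count follows by enumerating the integer pairs producing the same eigenvalue, equivalent to the stated cardinality of $\{j\in\mathbb{Z}\}$. The principal technical hurdle is the endpoint analysis for Kummer's equation---ruling out the $U$-branch at $r=0^+$ and using the exact exponential asymptotics at infinity to enforce polynomial truncation---both standard but requiring care to pin down the precise integer set $\ell\in\mathbb{N}$ responsible for the quantization.
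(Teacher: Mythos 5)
Your argument is the standard separation-of-variables reduction to Kummer's equation, which is exactly the method behind the result: the paper itself does not prove this proposition but simply cites \cite{WZZ1} (Proposition 2.1 there, with $\alpha=0$), and the same Kummer machinery is what the paper deploys in Section 2.1 for the deficiency-index computation. Two small points to tighten. First, check the sign of the cross term: with $\mathbf{A}=\frac{B_0}{2}(-x_2,x_1)$ one has $2i\mathbf{A}\cdot\nabla e^{ik\theta}=-kB_0e^{ik\theta}$, so your radial equation (which correctly carries $-B_0k$) yields $\lambda=(2\ell+1+|k|)B_0-kB_0$ for the mode $e^{ik\theta}$; this is \eqref{eigen-v} only after the relabeling $k\mapsto-k$, so the pairing of eigenvalue with angular eigenfunction in the statement reflects the opposite orientation convention, and you should say which one you are using. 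Second, excluding the $U$-branch by ``regularity at $r=0$'' is not quite enough when $k=0$: there $b=|k|+1=1$ and $U$ has only a logarithmic singularity, which \emph{is} locally in $L^2(r\,dr)$, so you must instead invoke membership in the domain of the (essentially self-adjoint) operator, or equivalently elliptic regularity across the origin, to discard it. Neither issue affects the substance of the quantization argument or the identification of $P_{k,\ell}$ with the terminating series $M(-\ell,|k|+1,\cdot)$.
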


\begin{proof} This is exact same to \cite[Proposition 2.1]{WZZ1} with $\alpha=0$.
\end{proof}

\begin{remark}
The spectrum of the operator $H_{B_0}+m^2\mp B_0$ is $\lambda_{k, \ell}+m^2\mp B_0\geq m^2\geq 0$.
This fact is important in the following argument in Section 3.
\end{remark}

\subsection{The Sobolev spaces}
From the above spectral property, we obtain the following proposition as a corollary of \cite[Proposition 2.5]{WZZ1}. The following lemma of the norm induced by $\D_A$ is one of key ingredients for proving our main result.
\begin{proposition}[Equivalent norms]\label{prop:E-S-B}
Let the Sobolev norm and Besov norm be defined in \eqref{Sobolev2} and \eqref{Besov} respectively. For $s\in\R$, then
there exist positive constants $c, C$ such that
\begin{equation}\label{S-B-H}
c\|\phi\|_{\dot \HH^s_{B_0}(\R^2)}\leq \|\phi\|_{\dot{\mathcal{B}}^s_{2,2}(\R^2)}\leq C \|\phi\|_{\dot \HH^s_{B_0}(\R^2)},
\end{equation}
and
\begin{equation}\label{S-B-H'}
c\|\phi\|_{ \HH^s_{B_0}(\R^2)}\leq \|\phi\|_{{\mathcal{B}}^s_{2,2}(\R^2)}\leq C \|\phi\|_{ \HH^s_{B_0}(\R^2)}.
\end{equation}
\end{proposition}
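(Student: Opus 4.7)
\emph{Proof plan.} The plan is to reduce both inequalities to scalar estimates on the spectrum of $H_{B_0}$ via the spectral theorem, following the same argument as the proof of \cite[Proposition 2.5]{WZZ1} specialized to $\alpha=0$. By Proposition \ref{prop:spect}, $H_{B_0}$ is self-adjoint with pure point spectrum $\lambda_{k,\ell}=(2\ell+1+|k|+k)B_0\ge B_0$ and admits an orthonormal eigenbasis $\{\tilde{V}_{k,\ell}\}$. Expanding $\phi=\sum_{k,\ell}c_{k,\ell}\tilde{V}_{k,\ell}$, the functional calculus and Plancherel give
\begin{equation*}
\|\phi\|_{\dot\HH^s_{B_0}}^2=\sum_{k,\ell}\lambda_{k,\ell}^s|c_{k,\ell}|^2,\qquad \|\varphi_j(\sqrt{H_{B_0}})\phi\|_{L^2}^2=\sum_{k,\ell}\varphi_j(\sqrt{\lambda_{k,\ell}})^2|c_{k,\ell}|^2,
\end{equation*}
so interchanging the sums reduces \eqref{S-B-H} to the pointwise estimate
\begin{equation*}
\sum_{j\in\Z}2^{2js}\varphi_j(\sqrt\lambda)^2\asymp\lambda^s,\qquad \lambda>0.
\end{equation*}

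This scalar bound is elementary: the dyadic support $\operatorname{supp}\varphi_j\subset[2^{j-1},2^j]$ gives $2^{2j}\sim\lambda$ wherever $\varphi_j(\sqrt\lambda)\ne 0$, hence $2^{2js}\sim\lambda^s$ there, and the identity $\sum_j\varphi_j\equiv 1$ on $(0,\infty)$ combined with the fact that each $\lambda$ meets at most two supports yields $\tfrac12\le\sum_j\varphi_j(\sqrt\lambda)^2\le 1$ via Cauchy--Schwarz. Substituting back gives \eqref{S-B-H}.

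For the inhomogeneous bound \eqref{S-B-H'}, the same reduction turns the claim into
\begin{equation*}
\phi_0(\sqrt\lambda)^2+\sum_{j\ge 1}2^{2js}\varphi_j(\sqrt\lambda)^2\asymp(\lambda+m^2+B_0)^s,\qquad \lambda\ge B_0.
\end{equation*}
For $\lambda$ in a bounded range near $B_0$, both sides are comparable to positive constants; for large $\lambda$, $\phi_0(\sqrt\lambda)=0$, $\lambda+m^2+B_0\sim\lambda$, and the homogeneous estimate applies. The crucial input is the spectral gap $\operatorname{spec}(H_{B_0})\subset[B_0,\infty)$, which removes any low-frequency singularity and explains why the inhomogeneous norm is weighted by $(H_{B_0}+m^2+B_0)^{s/2}$ rather than $H_{B_0}^{s/2}$. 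The entire argument is a clean combination of functional calculus and dyadic summation; no serious obstacle arises because the explicit eigenbasis provided by Proposition \ref{prop:spect} converts everything into a scalar estimate, and the only step requiring a moment's care is the match at the seam between $\phi_0$ and the $\varphi_j$ for $j\ge 1$, which is handled uniformly by the spectral gap.
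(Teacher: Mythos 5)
Your argument is correct and is essentially the route the paper intends: the paper simply cites \cite[Proposition 2.5]{WZZ1} (with $\alpha=0$) without writing out details, and your spectral-theorem reduction to the scalar dyadic estimates $\sum_j 2^{2js}\varphi_j(\sqrt\lambda)^2\asymp\lambda^s$ (and its inhomogeneous counterpart, using the spectral gap $\mathrm{spec}(H_{B_0})\subset[B_0,\infty)$) is exactly the standard argument underlying that reference. No gaps.
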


\begin{lemma}\label{lem:D-norm}
For any $s\in[0,1]$, we have
 \begin{align}
 \|\D_A^sf\|_{(L^2)^2}\leq  \|f\|_{(\HH^s_{\bf A})^2}
 \end{align}
where the fractional powers of $\D_A$ commute with the flow of \eqref{eq:Dirac}.
\end{lemma}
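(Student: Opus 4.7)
The plan is to reduce the inequality to a positive operator comparison between $\D_A^2$ and $(H_{B_0}+m^2+B_0)I_2$ on $[L^2(\R^2)]^2$, and then lift the comparison to fractional powers by using operator monotonicity of $t\mapsto t^{s/2}$ on $[0,\infty)$ for $s\in[0,1]$ (L\"owner--Heinz). First I would recall from \eqref{equ:WA} the matrix identity
\begin{equation*}
\D_A^2=\begin{pmatrix} H_{B_0}+m^2-B_0 & 0\\ 0 & H_{B_0}+m^2+B_0\end{pmatrix}.
\end{equation*}
By Proposition \ref{prop:spect} the spectrum of $H_{B_0}$ is contained in $[B_0,\infty)$, so $H_{B_0}-B_0\geq 0$ and both diagonal entries are nonnegative self-adjoint operators on $L^2(\R^2)$. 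Hence $\D_A^2$ is a nonnegative self-adjoint operator on $[L^2(\R^2)]^2$ and $|\D_A|^s:=(\D_A^2)^{s/2}$ is well-defined by the functional calculus; writing $\mathcal{T}:=(H_{B_0}+m^2+B_0)I_2$, the right-hand side of the claimed inequality is exactly $\langle\mathcal{T}^s f,f\rangle$.

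Next I would establish the operator inequality $\D_A^2\leq\mathcal{T}$. Both operators are diagonal with entries that are functions of the single scalar operator $H_{B_0}$, so they commute on their common domain, and the comparison reduces to the scalar check $H_{B_0}+m^2-B_0\leq H_{B_0}+m^2+B_0$, which is trivial. Using the simultaneous diagonalization furnished by the Landau eigenbasis $\{V_{k,\ell}\}$ of Proposition \ref{prop:spect} tensored with the standard basis of $\C^2$, the inequality is preserved upon raising both sides to the power $s/2$ for any $s\in[0,1]$, yielding $(\D_A^2)^{s/2}\leq\mathcal{T}^{s/2}$. Pairing with $f$ gives
\begin{equation*}
\|\D_A^s f\|_{[L^2(\R^2)]^2}^2=\langle|\D_A|^{2s}f,f\rangle\leq\langle\mathcal{T}^s f,f\rangle=\|f\|_{[\HH^s_{\bf A}(\R^2)]^2}^2,
\end{equation*}
which is the claimed bound.

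The commutation of $\D_A^s$ with the flow $e^{it\D_A}$ is immediate from the spectral theorem: both $e^{it\D_A}$ and $|\D_A|^s$ are bounded Borel functions of the single self-adjoint operator $\D_A$, hence commute strongly, and this commutation descends to the propagator acting on $[\HH^s_{\bf A}(\R^2)]^2$. There is no serious obstacle in the argument; the only delicate point is the domain issue when raising $\D_A^2\leq\mathcal{T}$ to a noninteger power, and this is handled cleanly by working in the common Landau eigenbasis (or, alternatively, by complex interpolation between the trivial $s=0$ bound and the direct $s=1$ identity).
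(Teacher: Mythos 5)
Your proposal is correct, and it reaches the same core inequality as the paper --- namely $\D_A^2\leq (H_{B_0}+m^2+B_0)I_2$ --- but by a different route at both ends. The paper establishes the $s=1$ case by expanding $\|\D_Af\|^2_{[L^2]^2}$ directly via integration by parts (essentially re-deriving the squaring identity \eqref{equ:WA} at the quadratic-form level, and setting $m=0$ "without loss of generality"), and then obtains $0<s<1$ by interpolation with the trivial $s=0$ endpoint. You instead take \eqref{equ:WA} as given, use the spectral fact $H_{B_0}\geq B_0$ to see that both diagonal entries are nonnegative, and pass to fractional powers by simultaneous diagonalization in the Landau eigenbasis (or L\"owner--Heinz), which avoids interpolation altogether and handles $m\geq 0$ uniformly; you also make explicit the commutation of $|\D_A|^s$ with $e^{it\D_A}$ via the functional calculus, which the paper only asserts. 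One cosmetic slip: you state that raising $\D_A^2\leq\mathcal{T}$ to the power $s/2$ gives $(\D_A^2)^{s/2}\leq\mathcal{T}^{s/2}$, whereas the displayed conclusion uses the power $s$; since both exponents lie in $[0,1]$ and the two operators commute, the eigenvalue-wise comparison gives either version, so this is not a gap. Overall your argument is cleaner and, because it isolates the spectral input $H_{B_0}\geq B_0$, slightly more transparent than the paper's computation.
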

\begin{proof}It is sufficient to prove the case $s=1$, as the full range of exponents can be obtained by interpolation because the case $s=0$ is obvious.

Denote
 $$\nabla_A=(\partial^A_1,\partial^A_2)=(i\partial_1+iA^1(x),i\partial_2+A^2(x)),$$
hence
 \begin{align*}
 \|\nabla_Af\|_{L^2}&=\int_{\R^2}|(i\partial_1+A^1)f|^2+|(i\partial_2+A^2)f|^2dx\\
 &=\int_{\R^2}[|\partial_1f|^2+|A^1f|^2-2\Im(\partial_1fA^1f) +|\partial_2f|^2+|A^2f|^2-2\Im(\partial_2fA^2f)]dx.
 \end{align*}
 Without loss of generality, we consider the case $m=0$,
 then for $f=(\begin{smallmatrix}f^+ \\ f^-\end{smallmatrix})\in (C^\infty_0(\R^2))^2$ there exist possible constant $C$ such that
 \begin{align*}
 \|\D_Af\|^2_{[L^2]^2}&=\int_{\R^2}|(-i\partial^A_1-\partial^A_2)f^-|^2+ |(-i\partial^A_1+\partial^A_2)f^-|^2dx\\
 &=\int_{\R^2}[|\nabla_A f^-|^2-2\Re(\partial_1f^-A^2f^-)+2\Re(A^1f^-\partial_2f^-)\\&\qquad+|\nabla_Af^+|^2 +2\Re(\partial_1f^+A^2f^+)-2\Re(A^1f^+\partial_2f^+)d]x\\
 &=\int_{\R^2}[|\nabla_A f^-|^2-B_0|f^-|^2+|\nabla_Af^+|^2+B_0|f^+|^2]dx\\
 &\leq\|f^-\|_{\HH^1_{\bf A}}+\|f^+\|_{\HH^1_{\bf A}}=\|f\|_{[\HH^1_{\bf A}]^2}.
 \end{align*}

\end{proof}

\subsection{Littlewood-Paley theory}\label{sec:tool}
For this part, we modify the argument of \cite{MS} and \cite{DPR} to provide a the key formula to construct a representation of propagator $e^{it\sqrt{H_{B_0}+m^2\mp B_0}}$.
Comparing with \cite{WZZ1, WZZ2}, since $\alpha=0$,  we are easy to use the Mehler heat kernel of $e^{-tH_{0,B_0}}$ (e.g. \cite[P168]{Simon2}) to obtain the heat kernel of  $H_{B_0}$
\begin{equation}\label{equ:Meh}
e^{-tH_{B_0}}(x,y)=\frac{B_0}{4\pi\sinh(B_0t)}e^{-\frac{B_0|x-y|^2}{4\tanh(B_0t)}-\frac{iB_0}2(x_1y_2-x_2y_1)}.
\end{equation}
Therefore, we use the standard argument, as did in \cite{WZZ1, WZZ2}, to show the Bernstein inequalities and the Littlewood-Paley theory associated with the operator $H_{B_0}$.

\begin{proposition}[Bernstein inequalities]
Let $\varphi(\lambda)$ be a $C^\infty_c$ bump function on $\R$  with support in $[\frac{1}{2},2]$, then it holds for any $f\in L^q(\mathbb{R}^2)$ and $j\in\mathbb{Z}$
\begin{equation}\label{est:Bern}
\|\varphi(2^{-j}\sqrt{H_{B_0}})f\|_{L^p(\mathbb{R}^2)}\lesssim2^{2j\big(\frac{1}{q}-\frac{1}{p}\big)}\|f\|_{L^q(\mathbb{R}^2)},\quad 1\leq q\leq p\leq\infty.
\end{equation}
\end{proposition}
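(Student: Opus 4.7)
My plan is the standard heat-kernel route: leverage the explicit Mehler formula \eqref{equ:Meh} to obtain Gaussian upper bounds, pass from those to $L^q\to L^p$ bounds for the heat semigroup, and then transfer to the Littlewood--Paley multiplier $\varphi(2^{-j}\sqrt{H_{B_0}})$ via a spectral-multiplier trick.

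First, I would extract from \eqref{equ:Meh} the pointwise bound
\begin{equation*}
|e^{-tH_{B_0}}(x,y)|\leq \frac{B_0}{4\pi\sinh(B_0 t)}\exp\!\Bigl(-\frac{B_0|x-y|^2}{4\tanh(B_0 t)}\Bigr),
\end{equation*}
which for $0<t\leq 1$ is dominated by $C t^{-1}e^{-c|x-y|^2/t}$, a Gaussian that depends only on $x-y$. Although the Mehler kernel itself is not translation-invariant because of the phase factor $e^{-\frac{iB_0}{2}(x_1y_2-x_2y_1)}$, the modulus is controlled by a genuine convolution kernel, so Young's inequality applies and gives
\begin{equation*}
\|e^{-tH_{B_0}}f\|_{L^p(\R^2)}\lesssim t^{-(\frac1q-\frac1p)}\|f\|_{L^q(\R^2)},\qquad 0<t\leq 1,\ 1\leq q\leq p\leq\infty.
\end{equation*}

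Next, I would write, with $t_j:=2^{-2j}$,
\begin{equation*}
\varphi(2^{-j}\sqrt{H_{B_0}})=\bigl[\varphi(2^{-j}\sqrt{H_{B_0}})\,e^{t_jH_{B_0}}\bigr]\circ e^{-t_jH_{B_0}}=\tilde\varphi(2^{-j}\sqrt{H_{B_0}})\circ e^{-t_jH_{B_0}},
\end{equation*}
where $\tilde\varphi(\lambda):=\varphi(\lambda)e^{\lambda^2}$ is still a smooth bump supported in $[1/2,1]$. Applying the previous heat-semigroup bound at $t=t_j$ gives the scaling factor $t_j^{-(1/q-1/p)}=2^{2j(1/q-1/p)}$ as long as $2^{-2j}\leq 1$, i.e.\ $j\geq 0$; for $j<0$ the dyadic piece is supported at low frequencies and the bound is uniform (one may even use the spectral-gap property coming from Proposition \ref{prop:spect}). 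The remaining point is therefore the uniform-in-$j$ bound
\begin{equation*}
\|\tilde\varphi(2^{-j}\sqrt{H_{B_0}})g\|_{L^p(\R^2)}\lesssim \|g\|_{L^p(\R^2)},\qquad 1\leq p\leq\infty,
\end{equation*}
which is a spectral-multiplier statement for $H_{B_0}$. The Gaussian heat-kernel bound established in Step~1 places $H_{B_0}$ in the setting of the Hebisch / Duong--Ouhabaz--Sikora theory of spectral multipliers, so this $L^p\to L^p$ bound follows once one verifies that $\tilde\varphi$ lies in the relevant symbol class (being a compactly supported smooth bump it obviously does); for the $L^1$ and $L^\infty$ endpoints the same conclusion follows from a direct Gaussian off-diagonal estimate for the kernel of $\tilde\varphi(2^{-j}\sqrt{H_{B_0}})$ obtained by Fourier-inverting $\tilde\varphi$ against the heat propagator.

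The main obstacle is precisely this last step: establishing the uniform $L^p\to L^p$ boundedness of $\tilde\varphi(2^{-j}\sqrt{H_{B_0}})$ at the endpoints $p=1,\infty$, where the spectral theorem no longer suffices. I would handle it by writing $\tilde\varphi(\lambda)=\int_0^\infty \eta(s)e^{-s\lambda^2}\,ds$ (a Laplace representation with a rapidly decaying $\eta$, available because $\tilde\varphi$ is a Schwartz bump away from $0$), which turns $\tilde\varphi(2^{-j}\sqrt{H_{B_0}})$ into a weighted integral of heat semigroups $e^{-s\,2^{-2j}H_{B_0}}$; the Gaussian bound from Step~1 rescales to a kernel of size $\lesssim 2^{2j}s^{-1}e^{-c\,2^{2j}|x-y|^2/s}$, whose $L^1_y$-norm is uniformly bounded, yielding the desired kernel estimate after integrating in $s$ against $\eta$. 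Interpolating with the obvious $L^2\to L^2$ bound then closes the full range $1\leq q\leq p\leq\infty$.
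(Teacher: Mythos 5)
Your overall strategy --- Gaussian bounds from the Mehler formula \eqref{equ:Meh}, Young's inequality for the heat semigroup, and the factorization $\varphi(2^{-j}\sqrt{H_{B_0}})=\tilde\varphi(2^{-j}\sqrt{H_{B_0}})\circ e^{-2^{-2j}H_{B_0}}$ --- is precisely the ``standard argument'' the paper invokes without detail (it defers to \cite{WZZ1,WZZ2}), and your Steps 1--2 are sound, including the remark that $j<0$ is harmless because $\inf\sigma(H_{B_0})=B_0>0$ kills all but finitely many negative $j$. The genuine flaw is in the last step: the representation $\tilde\varphi(\lambda)=\int_0^\infty\eta(s)e^{-s\lambda^2}\,ds$ with a rapidly decaying $\eta$ does not exist. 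The Laplace transform of any integrable (or even subexponentially growing) $\eta$ is real-analytic in $\mu=\lambda^2$ on $(0,\infty)$, while $\tilde\varphi(\sqrt{\mu})$ is a nonzero compactly supported function; so the only $\eta$ representing it is $\eta=0$. As written, your endpoint kernel estimate for $\tilde\varphi(2^{-j}\sqrt{H_{B_0}})$ therefore has no starting point. Note also that the Hebisch/Duong--Ouhabaz--Sikora route you cite first yields $L^p\to L^p$ for $1<p<\infty$ and weak $(1,1)$, but not the strong endpoints $p=1,\infty$ that \eqref{est:Bern} requires.

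The correct device is the one you mention only in passing: Fourier inversion rather than Laplace. Set $g(\mu):=\tilde\varphi(\sqrt{\mu})\,e^{\mu}=\varphi(\sqrt{\mu})e^{2\mu}\in C_c^\infty([\tfrac14,4])$, so that
\begin{equation*}
\tilde\varphi(2^{-j}\sqrt{H_{B_0}})=g(2^{-2j}H_{B_0})\,e^{-2^{-2j}H_{B_0}}
=\frac{1}{2\pi}\int_{\R}\hat g(\tau)\,e^{(i\tau-1)2^{-2j}H_{B_0}}\,d\tau .
\end{equation*}
The Mehler formula extends to complex time $z$ with $\Re z>0$ and gives $\sup_x\big\|e^{(i\tau-1)sH_{B_0}}(x,\cdot)\big\|_{L^1_y}\lesssim (1+\tau^2)$ uniformly for small $s>0$; since $\hat g$ is Schwartz, the $\tau$-integral converges and yields the uniform $L^1\to L^1$ and $L^\infty\to L^\infty$ bounds you need, after which interpolation closes the full range. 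Alternatively, for exponents with $q\le 2\le p$ (which covers the instance $\,q=1,\,p=\infty\,$ actually used later in the paper) you can bypass the multiplier bound entirely by sandwiching, $\varphi(2^{-j}\sqrt{H_{B_0}})=e^{-t_jH_{B_0}}\,\psi(2^{-j}\sqrt{H_{B_0}})\,e^{-t_jH_{B_0}}$ with $\psi(\lambda)=\varphi(\lambda)e^{2\lambda^2}$, and composing $L^q\to L^2$, $L^2\to L^2$ (spectral theorem), $L^2\to L^p$.
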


\begin{proposition}[The square function inequality]\label{prop:squarefun} Let $\{\varphi_j\}_{j\in\mathbb Z}$ be a Littlewood-Paley sequence given by\eqref{LP-dp}.
Then for $1<p<\infty$,
there exist constants $c_p$ and $C_p$ depending on $p$ such that
\begin{equation}\label{square}
c_p\|f\|_{L^p(\R^2)}\leq
\Big\|\Big(\sum_{j\in\Z}|\varphi_j(\sqrt{H_{B_0}})f|^2\Big)^{\frac12}\Big\|_{L^p(\R^2)}\leq
C_p\|f\|_{L^p(\R^2)}.
\end{equation}
\end{proposition}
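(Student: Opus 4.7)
The plan is to deduce the square function inequality from a Littlewood--Paley theory valid for self-adjoint operators whose heat semigroup satisfies Gaussian upper bounds. The Mehler representation \eqref{equ:Meh} already furnishes all the kernel information we need: from $\sinh(B_0 t)\geq B_0 t$ and $\tanh(B_0 t)\leq \min(B_0 t,1)$, one immediately obtains
$$|e^{-tH_{B_0}}(x,y)|\leq C\,t^{-1}\exp\!\big(-c|x-y|^2/t\big),\qquad 0<t\leq 1,$$
with exponential decay in $t$ for large times (because of the spectral gap $\mathrm{spec}(H_{B_0})\subset [B_0,\infty)$ from Proposition \ref{prop:spect}). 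This is exactly the hypothesis required to run the Stein--Littlewood--Paley theory for $H_{B_0}$.

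The first reduction is Khintchine's inequality: if $\{\epsilon_j\}_{j\in\Z}$ are independent Rademacher signs, then for $1<p<\infty$,
$$\Big\|\Big(\sum_{j\in\Z}|\varphi_j(\sqrt{H_{B_0}})f|^2\Big)^{\tfrac12}\Big\|_{L^p(\R^2)}\sim\Big(\mathbb{E}\Big\|\sum_{j\in\Z}\epsilon_j\varphi_j(\sqrt{H_{B_0}})f\Big\|_{L^p(\R^2)}^p\Big)^{\tfrac1p}.$$
Hence the upper bound in \eqref{square} is equivalent to the uniform-in-signs spectral multiplier estimate $\|m_\epsilon(\sqrt{H_{B_0}})f\|_{L^p}\leq C_p\|f\|_{L^p}$, where $m_\epsilon(\lambda):=\sum_j\epsilon_j\varphi_j(\lambda)$. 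Since $\varphi_j$ is supported in $[2^{j-1},2^{j+1}]$ with derivatives scaling as $|\varphi_j^{(N)}(\lambda)|\lesssim_N 2^{-jN}\mathbf{1}_{[2^{j-1},2^{j+1}]}$, the function $m_\epsilon$ satisfies a Mihlin--H\"ormander condition $\sup_{R>0}\|\eta(\cdot)m_\epsilon(R\,\cdot)\|_{C^N}\leq C_N$ with constants independent of $\{\epsilon_j\}$.

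I would then invoke a H\"ormander-type spectral multiplier theorem for the self-adjoint operator $\sqrt{H_{B_0}}$, which applies under the Gaussian heat kernel bound just established (cf. the Duong--Ouhabaz--Sikora framework, implemented in the present setting in \cite{WZZ1,WZZ2}). The required Plancherel/finite-propagation hypothesis is verified directly from the Mehler formula, exactly as done in \cite{WZZ1,WZZ2} for the Aharonov--Bohm case; the argument here is simply the specialization $\alpha=0$. This yields the upper bound in \eqref{square}; the lower bound then follows by a standard duality argument, pairing the square function against itself and using the identity $\sum_j \tilde\varphi_j(\sqrt{H_{B_0}})\varphi_j(\sqrt{H_{B_0}})=I$ on $L^p$ (for a thickened Littlewood--Paley family $\tilde\varphi_j$), together with the Cauchy--Schwarz inequality in $j$ and the upper bound applied with $p'$ in place of $p$.

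The main technical point is the uniform-in-$\epsilon$ verification of the multiplier hypothesis and the handling of the low-frequency pieces. The latter is actually harmless: because the spectrum of $H_{B_0}$ lies in $[B_0,\infty)$, $\varphi_j(\sqrt{H_{B_0}})\equiv 0$ for $2^{j+1}<\sqrt{B_0}$, so only finitely many negative $j$ contribute and those terms are trivially bounded on $L^p$ by the Bernstein inequality \eqref{est:Bern}. Thus the substantive work is entirely in the high-frequency regime, where the spectral multiplier theorem applies uniformly across scales.
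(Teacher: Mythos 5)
Your proposal is correct and follows essentially the same route as the paper, which likewise deduces the square function bound from the Mehler heat kernel, its Gaussian upper bound, and the Rademacher/Khintchine argument combined with a spectral multiplier theorem. Your write-up simply supplies the details (the explicit Gaussian bound from $\sinh(B_0t)\geq B_0t$, $\tanh(B_0t)\leq t B_0$, the uniform Mihlin--H\"ormander verification for $m_\epsilon$, the duality argument for the lower bound, and the observation that the spectral gap trivializes low frequencies) that the paper leaves to the cited references.
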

\begin{proof}
By using the heat kernel \eqref{equ:Meh}, Proposition \ref{prop:squarefun} follows from the Rademacher functions argument in \cite{Stein}. We also refer the reader for result that the square function inequality \ref{square} can be derived from the heat kernel with the Gaussian upper bounds.
\end{proof}
\begin{remark}If we replace $H_{B_0}$ by $H_{B_0}+m^2\mp B_0$, we will see that
\begin{equation}\label{equ:Meh}
e^{-t(H_{B_0}+m^2\mp B_0)}(x,y)=\frac{B_0e^{-t(m^2\mp B_0)}}{4\pi\sinh(B_0t)}e^{-\frac{B_0|x-y|^2}{4\tanh(B_0t)}-\frac{iB_0}2(x_1y_2-x_2y_1)}.
\end{equation}

\end{remark}

\subsection{A key lemma}

To prove our results, we give the following proposition about $e^{it\sqrt{H_{B_0}+m^2\mp B_0}}$ which modified the subordination formula  (see \cite{DPR,MS,WZZ2}).
 \begin{align}
 e^{-y\sqrt{x}}=\frac{y}{2\sqrt{\pi}}\int^\infty_0 e^{-sx-\frac{y^2}{4s}}s^{-\frac32}d s,\quad x,y>0.
 \end{align}
\begin{proposition}
Given a fixed $T>0$ and if $\varphi(\lambda)\in C_c^\infty(\mathbb{R})$ is supported in $[\frac{1}{2},2]$, then, for all $j\in\Z, x>B_0,\tilde{x}:=x+m^2\mp B_0>0$ and $0<t\leq T$ with $2^jt\geq 1$,  we can write
\begin{equation}\label{key}
\begin{split}
&\varphi(2^{-j}\sqrt{x})e^{it\sqrt{\tilde{x}}}\\&=\varphi(2^{-j}\sqrt{x})\tilde{\rho}\big(\frac{t\tilde{x}}{2^j}, 2^jt\big)
+\varphi(2^{-j}\sqrt{x})\big(2^jt\big)^{\frac12}\int_0^\infty \chi(s,2^jt)e^{\frac{i2^jt}{4s}}e^{i2^{-j}ts\tilde{x}}\,ds,
\end{split}
\end{equation}
where $\tilde{\rho}(a, t)$ satisfies
\begin{equation}\label{est:rho}
\big|\partial_a^\alpha\partial_t^\beta\tilde{\rho}(a, t)\big|\leq C_{N,\alpha,\beta} (a+t)^{-N}, \quad \frac18\leq \frac at\leq 8, \,  t\geq 1, \forall N\geq 1,
\end{equation}
and  $\chi\in C^\infty(\mathbb{R}\times\mathbb{R})$ with $\text{supp}\,\chi(\cdot,\tau)\subseteq[\frac{1}{16},8]$ such that
\begin{equation}\label{est:chi}
\sup_{\tau\in\R}\big|\partial_s^\alpha\partial_\tau^\beta \chi(s,\tau)\big|\lesssim_{\alpha,\beta}(1+|s|)^{-\alpha},\quad \forall \alpha,\beta\geq0.
\end{equation}
\end{proposition}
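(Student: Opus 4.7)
The identity \eqref{key} is a Littlewood--Paley localized stationary-phase representation of the half-wave propagator $e^{it\sqrt{\tilde{x}}}$, obtained by analytically continuing the subordination formula and rescaling. Writing the phase appearing in the oscillatory integral as
\begin{equation*}
\Phi(s)=\frac{2^jt}{4s}+2^{-j}ts\tilde{x},
\end{equation*}
its unique positive critical point is $s_\star=2^j/(2\sqrt{\tilde{x}})$ with $\Phi(s_\star)=t\sqrt{\tilde{x}}$. On the spectral support of $\varphi(2^{-j}\sqrt{x})$ one has $\sqrt{x}\sim 2^j$, so when $\tilde{x}\asymp 2^{2j}$ (which is precisely the regime $a/\tau\in[1/8,8]$ in \eqref{est:rho}) the critical point $s_\star\sim 1/2$ lies inside the compact interval $[1/16,8]$ where the cutoff $\chi$ is to be supported.

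The plan is to start from the classical subordination identity
\begin{equation*}
e^{-y\sqrt{\tilde{x}}}=\frac{y}{2\sqrt{\pi}}\int_0^\infty e^{-s\tilde{x}-\frac{y^2}{4s}}s^{-3/2}\,ds,\qquad \operatorname{Re}y>0,
\end{equation*}
and analytically continue to $y=-it$ by rotating the $s$-contour toward the negative imaginary axis and passing $y=\varepsilon-it$ with $\varepsilon\downarrow 0$. On the rotated contour $s=-i\rho$ both exponentials become purely oscillatory ($e^{i\rho\tilde{x}}$ and $e^{it^2/(4\rho)}$), yielding a conditionally convergent representation
\begin{equation*}
e^{it\sqrt{\tilde{x}}}=C\,t^{1/2}\int_0^\infty e^{i\rho\tilde{x}+it^2/(4\rho)}\,\rho^{-3/2}\,d\rho,
\end{equation*}
up to a harmless phase from the branch of $s^{-3/2}$. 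Performing the rescaling $\rho=2^{-j}ts$ converts this into $(2^jt)^{1/2}$ times $\int_0^\infty e^{i\Phi(s)}s^{-3/2}\,ds$, which is the skeleton of the second term on the RHS of \eqref{key}.

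Next I would introduce a smooth cutoff $\chi(s,2^jt)$ with the prescribed support $[1/16,8]$, identically $1$ in a neighbourhood of $s_\star$; the derivative bounds \eqref{est:chi} then follow directly from the choice of cutoff and the $s^{-3/2}$ weight. The complementary piece $1-\chi$ is supported away from $s_\star$, where the phase is nonstationary: using $\tilde{x}\asymp 2^{2j}$ one checks
\begin{equation*}
|\Phi'(s)|=\bigl|-2^jt/(4s^2)+2^{-j}t\tilde{x}\bigr|\gtrsim 2^jt
\end{equation*}
on the support of $1-\chi$, with matching bounds on higher derivatives. Iterating the nonstationary-phase operator $L=\tfrac{1}{i\Phi'(s)}\partial_s$ and integrating by parts $N$ times produces a gain of $(2^jt)^{-N}$. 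Expressing the resulting function in the variables $a=t\tilde{x}/2^j$ and $\tau=2^jt$ yields $\tilde\rho(a,\tau)$ satisfying \eqref{est:rho} on the ratio range $a/\tau\in[1/8,8]$; differentiation in $a$ and $\tau$ produces only polynomial growth in $s$ that is absorbed by further integration by parts.

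The main technical obstacle is the rigorous justification of the contour rotation and the passage $\varepsilon\downarrow 0$: one must verify that Cauchy's theorem applies on a wedge in the lower half-plane with the boundary arcs at the origin and at infinity contributing zero in the limit, so that the oscillatory representation of $e^{it\sqrt{\tilde{x}}}$ is legitimate. This is the delicate step and is carried out as in \cite{MS,DPR,WZZ2}. Once the representation is in place, the remaining analysis (the $\chi/(1-\chi)$ decomposition and the nonstationary-phase integrations by parts on the tail) is routine, and multiplying through by $\varphi(2^{-j}\sqrt{x})$ yields \eqref{key} with the stated estimates.
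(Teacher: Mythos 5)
Your proposal follows essentially the same route as the paper: both start from the subordination formula, analytically continue to $y=\varepsilon-it$ with $\varepsilon\downarrow 0$, rescale to the variables $a=t\tilde{x}/2^j$, $\tau=2^jt$, and then split the oscillatory integral into a compactly supported piece near the critical point $s_\star$ (giving the $\chi$-term) and a nonstationary remainder handled by repeated integration by parts (giving $\tilde{\rho}$), the latter being exactly the stationary-phase lemma the paper imports from \cite{WZZ2}. The argument is correct and no substantive difference in method needs to be recorded.
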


\begin{remark}
If this has been done, then by the spectral theory for the non-negative self-adjoint operator $H_{B_0}$, we can have the representation of the micolocalized
half-wave propagator
\begin{equation}\label{key-operator}
\begin{split}
&\varphi(2^{-j}\sqrt{H_{B_0}})e^{it\sqrt{H_{B_0}+m^2\mp B_0}}\\&=\varphi(2^{-j}\sqrt{H_{B_0}})\tilde{\rho}\big(\frac{t(H_{B_0}+m^2\mp B_0)}{2^j}, 2^jt\big)\\
&+\varphi(2^{-j}\sqrt{H_{B_0}})\big(2^jt\big)^{\frac12}\int_0^\infty \chi(s,2^jt)e^{\frac{i2^jt}{4s}}e^{i2^{-j}ts(H_{B_0}+m^2\mp B_0)}\,ds.
\end{split}
\end{equation}
\end{remark}

\begin{proof}
Our starting point of the proof is the subordination formula
\begin{equation}\label{change:subord}
e^{-y\sqrt{\tilde{x}}}=\frac{y}{2\sqrt{\pi}}\int_0^\infty e^{-s\tilde{x}-\frac{y^2}{4s}}s^{-\frac{3}{2}}ds,\quad \tilde{x}=x+m^2\mp B_0,y>0.
\end{equation}
To obtain $e^{it\sqrt{\tilde{x}}}$, we extend \eqref{change:subord} by setting $y=\epsilon-it$ with $\epsilon>0$
\begin{equation}\label{identity}
\begin{split}
e^{it\sqrt{\tilde{x}}}&=\lim_{\epsilon\to 0^+}e^{-(\epsilon-it)\sqrt{\tilde{x}}}\\
&=\lim_{\epsilon\to 0}\frac{\sqrt{\epsilon-it}}{2\sqrt{\pi}}I_{\epsilon, \epsilon \tilde{x}}(t\tilde{x},t),
\end{split}
\end{equation}
where
\begin{equation*}
I_{\epsilon, \delta}(a,t):=\int_0^\infty e^{ira}e^{-\delta r}e^{\frac{it}{4r}}e^{-\frac{\epsilon}{4r}} r^{-\frac{3}{2}}dr.
\end{equation*}
By the dominate convergence theorem, we have that
\begin{equation*}
\begin{split}
e^{it\sqrt{\tilde{x}}}
&=\lim_{\epsilon\to 0}\frac{\sqrt{\epsilon-it}}{2\sqrt{\pi}}I_{\epsilon, \epsilon \tilde{x}}(t\tilde{x},t)=\sqrt{\frac{t}{4\pi}}e^{-\frac\pi 4i}\lim_{\epsilon\to0}I_{\epsilon, \epsilon \tilde{x}}(t
\tilde{x},t).
\end{split}
\end{equation*}
Thus it suffices to consider the oscillation integral
\begin{equation}\label{limit}
\begin{split}
\lim_{\epsilon\to 0}I_{\epsilon, \epsilon \tilde{x}}(a,t)=I_{0, 0}(a,t)=\int_0^\infty e^{ira} e^{\frac{it}{4r}} r^{-\frac{3}{2}}dr.
\end{split}
\end{equation}

\begin{lemma}\label{lem: stationary} Let
\begin{equation*}
\begin{split}
I(a,t)=\int_0^\infty e^{ira} e^{\frac{it}{4r}} r^{-\frac{3}{2}}dr.
\end{split}
\end{equation*}
Then we can write
\begin{equation}\label{est: stat}
I(a,t)=\tilde{\rho}(a, t)+\int_0^\infty e^{ira} e^{\frac{it}{4r}} \tilde{\chi} (r)\, dr,
\end{equation}
where $\tilde{\chi}(r)\in C_0^\infty(r)$ and $\mathrm{supp}\,\tilde{\chi} \subset [\frac1{16},8]$ and $\tilde{\rho}(a, t)$ satisfies
\begin{equation}\label{est:rho}
\big|\partial_a^\alpha\partial_t^\beta\tilde{\rho}(a, t)\big|\leq C_{N,\alpha,\beta} (a+t)^{-N}, \quad \frac18\leq \frac at\leq 8, \,  t\geq 1, \forall N\geq 1.
\end{equation}
\end{lemma}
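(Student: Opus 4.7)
This is a standard non-stationary/stationary phase decomposition for the oscillatory integral with phase $\phi(r) = ra + \frac{t}{4r}$. The derivative $\phi'(r) = a - \frac{t}{4r^2}$ vanishes at $r_0 = \frac{1}{2}\sqrt{t/a}$, and in the regime $\frac{1}{8} \leq \frac{a}{t} \leq 8$ the critical point satisfies $r_0 \in [\tfrac{1}{4\sqrt 2},\sqrt 2]$, a fixed compact subinterval of $(0,\infty)$. The plan is to isolate the critical point with a smooth cutoff and absorb everything else into the remainder $\tilde\rho$.

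First I would pick $\chi_0 \in C_c^\infty(\mathbb R)$ with $0\le \chi_0\le 1$, $\chi_0\equiv 1$ on $[\tfrac18,4]$ and $\mathrm{supp}\,\chi_0\subset[\tfrac1{16},8]$, and set $\tilde\chi(r)=\chi_0(r)r^{-3/2}$, so $\tilde\chi\in C_c^\infty$ with support in $[\tfrac1{16},8]$. Defining
\[
\tilde\rho(a,t):=\int_0^\infty e^{i\phi(r)}\bigl(1-\chi_0(r)\bigr)r^{-3/2}\,dr,
\]
the decomposition \eqref{est: stat} is immediate. On $\mathrm{supp}(1-\chi_0)$ we have either $r\le \tfrac18$ or $r\ge 4$, so $r$ is quantitatively separated from $r_0$. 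Concretely: for $r\le \tfrac18$, since $a\le 8t$, one has $\tfrac{t}{4r^2}\ge 16t\ge 2a$, hence $|\phi'(r)|\ge \tfrac{t}{8r^2}\gtrsim t+a$; for $r\ge 4$, since $a\ge t/8$, one has $\tfrac{t}{4r^2}\le a/8$, hence $|\phi'(r)|\ge \tfrac{7a}{8}\gtrsim a+t$.

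Next, repeated integration by parts using the identity $e^{i\phi}=\frac1{i\phi'}\partial_r e^{i\phi}$ and the operator $L^*f=-\partial_r\bigl(f/(i\phi')\bigr)$ gives
\[
\tilde\rho(a,t)=\int_0^\infty e^{i\phi(r)}\,g_N(r)\,dr,\qquad g_N=(L^*)^N\bigl[(1-\chi_0)r^{-3/2}\bigr],
\]
provided the boundary contributions vanish. Near $r=0$, $g(r)/\phi'(r)\sim r^{-3/2}\cdot r^2/t=r^{1/2}/t\to 0$; at $r=\infty$, $g(r)/\phi'(r)\sim r^{-3/2}/a\to 0$, so boundary terms indeed vanish after each step. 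By induction $g_N$ is a finite sum of terms of the form
\[
\frac{P(\phi^{(2)},\phi^{(3)},\dots)}{(\phi')^{N+k}}\,\partial_r^{j}\bigl[(1-\chi_0)r^{-3/2}\bigr],
\]
and using $|\phi^{(k)}(r)|\lesssim t\,r^{-(k+1)}$ together with the lower bounds on $|\phi'|$, bookkeeping the powers of $r$ and $t$ shows that on each of the two regions $g_N$ is pointwise dominated by $(a+t)^{-N}$ times a fixed integrable function of $r$ (the $r\le\tfrac18$ region becomes integrable precisely because each IBP step produces an extra factor $\sim r^2/t$, turning $r^{-3/2}$ into $r^{1/2}/t$, and so on). This yields $|\tilde\rho(a,t)|\lesssim_N (a+t)^{-N}$.

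For the mixed derivative bound, differentiating under the integral gives $\partial_a^\alpha\partial_t^\beta \tilde\rho$ as the same type of oscillatory integral with $(1-\chi_0)r^{-3/2}$ replaced by a constant multiple of $(1-\chi_0)r^{\alpha-\beta-3/2}$. The same integration-by-parts argument applies verbatim, with slightly adjusted powers of $r$ in the amplitude bookkeeping; the lower bounds on $|\phi'|$ are unchanged, and the boundary terms still vanish (trivially at $\infty$, and at $0$ after enough IBP steps). The main technical obstacle is the bookkeeping in Step 4 --- namely, verifying that after $N$ applications of $L^*$ every resulting term is dominated by $(a+t)^{-N}$ times an $r$-integrable function, simultaneously on the inner region $r\le\tfrac18$ (where the phase is very steep and $g$ has a nonintegrable singularity before IBP) and on the outer region $r\ge 4$. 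This is a routine but somewhat tedious combinatorial induction on $N$, which I would carry out by tracking the weights $(r^2/t)^N$ in the inner region and $a^{-N}$ in the outer region.
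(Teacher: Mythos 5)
Your proposal is correct and is exactly the standard stationary/non-stationary phase decomposition that the paper invokes here: the paper itself gives no proof of this lemma, only the citation ``this is \cite[Lemma]{WZZ2}, a consequence of the stationary phase argument,'' and your cutoff around the critical point $r_0=\tfrac12\sqrt{t/a}$ together with repeated integration by parts on $\{r\le\tfrac18\}\cup\{r\ge4\}$ is precisely that argument, with the correct lower bounds $|\phi'|\gtrsim t/r^2$ on the inner region and $|\phi'|\gtrsim a$ on the outer one. The only soft spot is the treatment of the $r\to0$ endpoint for $\partial_a^\alpha\partial_t^\beta\tilde\rho$: since $\partial_t$ brings down a factor $r^{-1}$, differentiating under the integral sign first and then integrating by parts produces divergent boundary terms at $r=0$ in the first few steps when $\beta$ is large; this is repaired either by keeping the regularization $e^{-\epsilon/(4r)}$ from the paper's definition of $I_{\epsilon,\delta}$ until after the integrations by parts, or by first writing $\tilde\rho=\int e^{i\phi}g_N\,dr$ with $g_N$ integrable and only then differentiating in $(a,t)$, at the cost of also differentiating the $t$-dependence inside $g_N$. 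With that reordering made explicit, the bookkeeping you describe closes the proof.
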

\begin{proof}
This is \cite[Lemma]{WZZ2} which is the consequence of the stationary phase argument.
\end{proof}
By \eqref{identity} and \eqref{limit} and noticing
$$I(a,t)=2^{\frac j2}I(2^{-j}a, 2^jt),$$ we have
that \begin{equation*}
\varphi(2^{-j}\sqrt{x})e^{it\sqrt{\tilde{x}}}=\sqrt{\frac{t}{4\pi}}e^{-\frac\pi 4i}\varphi(2^{-j}\sqrt{x}) 2^{\frac j2} I\big(\frac{t\tilde{x}}{2^j}, 2^jt\big).
\end{equation*}
Therefore, by using this lemma, we prove \eqref{key}
\begin{equation*}
\begin{split}
&\varphi(2^{-j}\sqrt{x})e^{it\sqrt{\tilde{x}}}\\=&\frac1{\sqrt{4\pi}}e^{-\frac\pi 4i}\big(2^jt\big)^{\frac12}\varphi(2^{-j}\sqrt{x}) \Big(\tilde{\rho}\big(\frac{t\tilde{x}}{2^j}, 2^jt\big)
+\int_0^\infty \tilde{\chi}(s)e^{\frac{i2^jt}{4s}}e^{i2^{-j}t\tilde{x}s}\,ds\Big).
\end{split}
\end{equation*}

We need consider this expression when $2^jt\geq 1$. To this end, let $\phi\in C^\infty([0,+\infty)$ satisfies $\phi(t)=1$ if $t\ge1$ and $\phi(t)=0$ if $0\leq t\leq \frac12$,
then set $$\tilde{\rho}\big(\frac{t\tilde{x}}{2^j}, 2^jt\big)=e^{-\frac\pi 4i}\big(2^jt\big)^{\frac12}\varphi(2^{-j}\sqrt{x})\tilde{\rho}\big(\frac{t(x+m^2\pm B_0)}{2^j}, 2^jt\big)\phi(2^jt). $$
This together with \eqref{est:rho} shows
\begin{equation*}
\big|\partial_a^\alpha\partial_t^\beta{\rho}(a, t)\big|\leq C_{N,\alpha,\beta}(1+ (a+t))^{-N}, \quad  \forall N\geq 0.
\end{equation*}
which implies
$\rho(a,t)\in \mathcal{S}(\R_+\times\R_+)$.
Set $$\chi\big(s, 2^jt\big)=e^{-\frac\pi 4i}\tilde{\chi}\big(s\big)\phi(2^jt),$$
 then $\chi$ satisfies \eqref{est:chi}.
 then we finally write
\begin{equation*}
\begin{split}
&\varphi(2^{-j}\sqrt{x})e^{it\sqrt{\tilde{x}}}\\=&\tilde{\rho}\big(\frac{t\tilde{x}}{2^j}, 2^jt\big)
+\big(2^jt\big)^{\frac12}\varphi(2^{-j}\sqrt{x}) \int_0^\infty \chi(s,2^jt)e^{\frac{i2^jt}{4s}}e^{i2^{-j}t\tilde{x}s}\,ds,
\end{split}
\end{equation*}
which proves \eqref{key} as desired.
\end{proof}

\section{The proof of  Theorem \ref{thm:dispersive}}
In this section, we prove Theorem \ref{thm:dispersive}.

\subsection{The decay estimates}\label{sec:decay}
 We first prove the following results
\begin{proposition}\label{prop: decay} Let $2^{-j}|t|\leq \frac{\pi}{8B_0}$ and $\varphi$ be in \eqref{LP-dp}, then
\begin{equation}\label{est: mic-decay1}
\begin{split}
\big\|\varphi(2^{-j}\sqrt{H_{B_0}})&e^{it\sqrt{H_{B_0}+m^2\mp B_0}}f\big\|_{L^\infty(\R^2)}\\&\lesssim 2^{2j}\big(1+2^j|t|\big)^{-\frac12}\|\varphi(2^{-j}\sqrt{H_{B_0}}) f\|_{L^1(\R^2)}.
\end{split}
\end{equation}
In particular, for $0<t<T$ with any finite $T$, there exists a constant $C_T$ depending on $T$ such that
\begin{equation}\label{est: mic-decay2}
\begin{split}
\big\|\varphi(2^{-j}\sqrt{H_{B_0}})&e^{it\sqrt{H_{B_0}+m^2\mp B_0}}f\big\|_{L^\infty(\R^2)}\\&\leq C_T 2^{2j}\big(1+2^j|t|\big)^{-\frac12}\|\varphi(2^{-j}\sqrt{H_{B_0}}) f\|_{L^1(\R^2)}.
\end{split}
\end{equation}
\end{proposition}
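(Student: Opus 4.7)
The plan is to split the proof of \eqref{est: mic-decay1}--\eqref{est: mic-decay2} according to the size of $2^j|t|$, and to reduce the half-wave propagator to a Schr\"odinger-type propagator via the representation formula \eqref{key-operator}. Throughout, I use a fattened cutoff $\tilde\varphi\in C_c^\infty$ with $\tilde\varphi\equiv 1$ on $\mathrm{supp}\,\varphi$ and the identity $\varphi(2^{-j}\sqrt{H_{B_0}})=\tilde\varphi(2^{-j}\sqrt{H_{B_0}})\,\varphi(2^{-j}\sqrt{H_{B_0}})$, which together with the commutativity of spectral functions of $H_{B_0}$ reduces the task to bounding $\|\tilde\varphi(2^{-j}\sqrt{H_{B_0}})\, e^{it\sqrt{H_{B_0}+m^2\mp B_0}}\|_{L^1\to L^\infty}$. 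When $2^j|t|\leq 1$, this is immediate: Bernstein's inequality \eqref{est:Bern} and the $L^2$-unitarity of the half-wave propagator give
\begin{equation*}
\|\tilde\varphi(2^{-j}\sqrt{H_{B_0}})\, e^{it\sqrt{\cdot}}\, g\|_{L^\infty}\leq \|\tilde\varphi(2^{-j}\sqrt{H_{B_0}})\|_{L^2\to L^\infty}\|g\|_{L^2}\lesssim 2^j\cdot 2^j\|g\|_{L^1},
\end{equation*}
matching the claim since $(1+2^j|t|)^{-1/2}\sim 1$.

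When $2^j|t|\geq 1$ and $2^{-j}|t|\leq \pi/(8B_0)$, I invoke \eqref{key-operator}. The $\tilde\rho$-term obeys, by the spectral calculus on $\mathrm{supp}\,\varphi(2^{-j}\sqrt{\cdot})$ (where $H_{B_0}\sim 2^{2j}$ so that $a\sim 2^j|t|$), the pointwise bound $|\tilde\rho|\lesssim (2^j|t|)^{-N}$; combined with the short-time Bernstein argument this contribution is $\lesssim 2^{2j}(2^j|t|)^{-N}$, easily absorbed into the main bound. For the main oscillatory integral, the essential input is the Mehler-type kernel for $e^{i\tau H_{B_0}}$ obtained by analytic continuation of \eqref{equ:Meh} in the time variable, whose modulus equals $B_0/(4\pi|\sin(B_0\tau)|)$; the restriction $2^{-j}|t|\leq \pi/(8B_0)$ together with $s\in\mathrm{supp}\,\chi\subset[1/16,8]$ keeps $|B_0\tau|=|B_0\cdot 2^{-j}ts|$ bounded away from the singular value $\pi$, whence $|K_{\mathrm{Sch}}(\tau;z,y)|\lesssim 1/|\tau|$. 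The cutoff $\tilde\varphi(2^{-j}\sqrt{H_{B_0}})$ is uniformly bounded on $L^\infty$ (from the Gaussian heat kernel bound \eqref{equ:Meh} and spectral calculus), so composition preserves this pointwise kernel bound. Substituting and integrating over $s$ then yields
\begin{equation*}
\|\tilde\varphi(2^{-j}\sqrt{H_{B_0}})\, e^{it\sqrt{\cdot}}\|_{L^1\to L^\infty}\lesssim (2^j|t|)^{1/2}\int_{1/16}^{8}\frac{|\chi(s,2^jt)|}{2^{-j}|t|s}\,ds\lesssim 2^{3j/2}|t|^{-1/2}\sim 2^{2j}(2^j|t|)^{-1/2},
\end{equation*}
which is \eqref{est: mic-decay1}. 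To pass to \eqref{est: mic-decay2} on the full interval $[0,T]$, I note that if $2^{-j}|t|>\pi/(8B_0)$ then $2^j<C_T:=8B_0T/\pi$, so only finitely many frequencies fall outside the regime above; on these frequencies $2^{2j}$ and $(1+2^j|t|)^{-1/2}$ are comparable to constants depending only on $T$, and the crude Bernstein bound from the short-time case already implies \eqref{est: mic-decay2} after absorbing everything into $C_T$.

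The main obstacle is the pointwise kernel control of $\tilde\varphi(2^{-j}\sqrt{H_{B_0}})\, e^{i\tau H_{B_0}}$: the $1/|\tau|$ decay of the Schr\"odinger-type propagator requires staying strictly away from the poles of $\sin(B_0\tau)$, which is precisely what the hypothesis $2^{-j}|t|\leq \pi/(8B_0)$ is engineered to ensure, and this is exactly what forces the complementary low-frequency regime to be treated separately in the extension step.
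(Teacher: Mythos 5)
Your proposal is correct and follows essentially the same route as the paper: Bernstein plus $L^2$-boundedness for $2^j|t|\lesssim 1$, the subordination representation \eqref{key-operator} with the rapidly decaying $\tilde\rho$ remainder for $2^j|t|\gg 1$, the $|\sin(B_0\tau)|^{-1}$ dispersive bound for $e^{i\tau H_{B_0}}$ (which you derive from the Mehler kernel where the paper cites \cite{KL}), and the same bookkeeping to extend to $[0,T]$ by isolating the finitely many low frequencies with $2^{-j}|t|>\pi/(8B_0)$. The only substantive additions are your explicit justification of the $L^\infty$-boundedness of the outer cutoff, which the paper leaves implicit, and the shared borderline issue at $s=8$, $2^{-j}tB_0=\pi/8$ where $\sin(2^{-j}tsB_0)$ can vanish, which affects the paper's argument equally.
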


\begin{remark} The finite $T$ can be chosen beyond  $\frac{\pi}{B_0}$. If we could prove \eqref{est: mic-decay2},
then \eqref{dis-w} follows
\begin{equation*}
\begin{split}
&\big\|e^{it\sqrt{H_{B_0}+m^2\mp B_0}}f\big\|_{L^\infty(\R^2)}\leq \sum_{j\in\Z}\big\|\varphi(2^{-j}\sqrt{H_{B_0}})e^{it\sqrt{H_{B_0}+m^2\mp B_0}}f\big\|_{L^\infty(\R^2)}\\
&\leq C_T |t|^{-\frac12}\sum_{j\in\Z}2^{\frac32j}\|\varphi(2^{-j}\sqrt{H_{B_0}}) f\|_{L^1(\R^2)}\leq C_T |t|^{-\frac12}\|f\|_{\dot{\mathcal{B}}^{3/2}_{1,1}(\R^2)}.
\end{split}
\end{equation*}
\end{remark}

\begin{proof}[The proof of Proposition \ref{prop: decay}]We estimate  the microlocalized half-wave propagator $$ \big\|\varphi(2^{-j}\sqrt{H_{B_0}})e^{it\sqrt{H_{B_0}+m^2\mp B_0}}f\big\|_{L^\infty(\R^2)}$$
by considering two cases that: $|t|2^j\gg 1$ and $|t|2^{j}\lesssim 1$. In the following argument, we can choose $\tilde{\varphi}\in C_c^\infty((0,+\infty))$ such that $\tilde{\varphi}(\lambda)=1$ if $\lambda\in\mathrm{supp}\,\varphi$
and $\tilde{\varphi}\varphi=\varphi$. Since $\tilde{\varphi}$ has the same property of $\varphi$, without confusion, we drop off the tilde above $\varphi$ for brief. Without loss of generality, in the following argument, we assume $t>0$.
\vspace{0.2cm}

{\bf Case 1: $t2^{j}\lesssim 1$.} We remark that we consider $t2^{j}\lesssim 1$ while not $t2^{j}\leq 1$, this will be used to extend the time interval. By the spectral theorem, one has
$$\|e^{it\sqrt{H_{B_0}+m^2\mp B_0}}\|_{L^2(\R^2)\to L^2(\R^2)}\leq C.$$
Indeed, denote $\lambda_{k,\ell}$ the spectrum of $H_{B_0}$, by the functional calculus, for $f\in L^2$,
we can write
\begin{equation*}
\begin{split}
e^{it\sqrt{H_{B_0}+m^2\mp B_0}} f&=\sum_{k\in\Z, \atop \ell\in\mathbb{N}} e^{it\sqrt{\lambda_{k,\ell}+m^2\mp B_0}} c_{k,\ell}\tilde{V}_{k,\ell}(x).
\end{split}
\end{equation*}
where
\begin{equation*}
\begin{split}
c_{k,\ell}=\int_{\mathbb{R}^2}f(y)\overline{\tilde{V}_{k,\ell}(y)} dy.
\end{split}
\end{equation*}

Thus we obtain
\begin{equation*}
\begin{split}
&\|e^{it\sqrt{H_{B_0}+m^2\mp B_0}} f\|_{L^2(\R^2)}\\
&=\Big(\sum_{k\in\Z, \atop \ell\in\mathbb{N}} \big| e^{it\sqrt{\lambda_{k,\ell}+m^2\mp B_0}} c_{k,\ell}\big|^2\Big)^{1/2}
=\Big(\sum_{k\in\Z, \atop \ell\in\mathbb{N}} \big| c_{k,\ell}\big|^2\Big)^{1/2}
=\|f\|_{L^2(\R^2)}.
\end{split}
\end{equation*}
Together with this, we use the Bernstein inequality \eqref{est:Bern}
 to prove
\begin{equation*}
\begin{split}
&\big\|\varphi(2^{-j}\sqrt{H_{B_0}})e^{it\sqrt{H_{B_0}+m^2\mp B_0}}f\big\|_{L^\infty(\R^2)}\\
&\lesssim 2^{j}\|e^{it\sqrt{H_{B_0}+m^2\mp B_0}}\varphi(2^{-j}\sqrt{H_{B_0}}) f\|_{L^2(\R^2)}\\
&\lesssim 2^{j}\|\varphi(2^{-j}\sqrt{H_{B_0}}) f\|_{L^2(\R^2)}\lesssim 2^{2j}\|\varphi(2^{-j}\sqrt{H_{B_0}}) f\|_{L^1(\R^2)}.
\end{split}
\end{equation*}
In this case $0<t\lesssim 2^{-j}$, we have
\begin{equation}\label{<1}
\begin{split}
&\big\|\varphi(2^{-j}\sqrt{H_{B_0}})e^{it\sqrt{H_{B_0}+m^2\mp B_0}}f\big\|_{L^\infty(\R^2)}\\
&\quad\lesssim 2^{2j}(1+2^jt)^{-N}\|\varphi(2^{-j}\sqrt{H_{B_0}}) f\|_{L^1(\R^2)},\quad \forall N\geq 0.
\end{split}
\end{equation}\vspace{0.2cm}

{\bf Case 2: $t2^{j}\gg 1$.} In this case, we can use
 \eqref{key-operator} to obtain the micolocalized
half-wave propagator
\begin{equation*}
\begin{split}
&\varphi(2^{-j}\sqrt{H_{B_0}})e^{it\sqrt{H_{B_0}+m^2\mp B_0}}\\&=\varphi(2^{-j}\sqrt{H_{B_0}})\tilde{\rho}\big(\frac{t(H_{B_0}+m^2\mp B_0)}{2^j}, 2^jt\big)\\
&+\varphi(2^{-j}\sqrt{H_{B_0}})\big(2^jt\big)^{\frac12}\int_0^\infty \chi(s,2^jt)e^{\frac{i2^jt}{4s}}e^{i2^{-j}ts(H_{B_0}+m^2\mp B_0)}\,ds.
\end{split}
\end{equation*}
We first use the spectral theorems and the Bernstein inequality again to estimate
\begin{equation*}
\begin{split}
&\big\|\varphi(2^{-j}\sqrt{H_{B_0}})\tilde{\rho}\big(\frac{t(H_{B_0}+m^2\mp B_0)}{2^j}, 2^jt\big) f\big\|_{L^\infty(\R^2)}.
\end{split}
\end{equation*}
On the one hand, by the support of $\varphi$, one has $2^{2j-2}\leq \lambda_{k,\ell}\leq 2^{2j+2}$. On the other hand,
we note the condition $2^jt\gg 1$ and $0<t\leq T$ with finite $T$, there exist a small constant $c$ depending on $T$ such that $\Big|\frac{m^2\mp B_0}{2^{2j}}\Big|\leq c\leq \frac1{100}$.
Therefore, we obtain
$$\frac18\leq \frac14+\frac{m^2\mp B_0}{2^{2j}}\leq \frac{t(\lambda_{k,\ell}+m^2\mp B_0)}{2^j}/2^jt=\frac{\lambda_{k,\ell}+m^2\mp B_0}{2^{2j}}\leq 4+\frac{m^2\mp B_0}{2^{2j}}\leq 8$$
which satisfies \eqref{est:rho}, then $$\big|\tilde{\rho}\big(\frac{t(\lambda_{k,m}+m^2\mp B_0)}{2^j}, 2^jt\big)\big|\leq C(1+2^jt)^{-N},\quad \forall N\geq 0.$$
Therefore, we use  the Bernstein inequality and  the spectral theorems to show
\begin{equation*}
\begin{split}
&\big\|\varphi(2^{-j}\sqrt{H_{B_0}})\rho\big(\frac{tH_{B_0}+m^2\mp B_0}{2^j}, 2^jt\big) f\big\|_{L^\infty(\R^2)}\\
&\lesssim 2^{j}\Big\|\rho\big(\frac{t(H_{B_0}+m^2\mp B_0)}{2^j}, 2^jt\big)\varphi(2^{-j}\sqrt{H_{B_0}}) f\Big\|_{L^2(\R^2)}\\
&\lesssim 2^{j}(1+2^jt)^{-N}\Big\|\varphi(2^{-j}\sqrt{H_{B_0}}) f\Big\|_{L^2(\R^2)}\\
&\lesssim 2^{2j}(1+2^jt)^{-N}\Big\|\varphi(2^{-j}\sqrt{H_{B_0}}) f\Big\|_{L^1(\R^2)}.
\end{split}
\end{equation*}

Next we use the dispersive estimates of Schr\"odinger propagator in \cite{KL}
\begin{equation*}
\begin{split}
&\big\|e^{it(H_{B_0}+m^2\mp B_0)} f\big\|_{L^\infty(\R^2)}\\
&=\big\|e^{itH_{B_0}} f\big\|_{L^\infty(\R^2)}\leq C |\sin(tB_0)|^{-1}\big\| f\big\|_{L^1\R^2)},\quad t\neq \frac{k\pi}{B_0}, \,k\in \Z,
\end{split}
\end{equation*}
 to estimate
\begin{equation*}
\begin{split}
&\big\|\varphi(2^{-j}\sqrt{H_{B_0}})\big(2^jt\big)^{\frac12}\int_0^\infty \chi(s,2^jt)e^{\frac{i2^jt}{4s}}e^{i2^{-j}ts(H_{B_0}+m^2\mp B_0)}f\,ds \big\|_{L^\infty(\R^2)}.
\end{split}
\end{equation*}
For $0<t<T_0<\frac{\pi}{2B_0}$, thus $\sin(tB_0)\sim tB_0$, then we obtain
\begin{equation*}
\begin{split}
&\big\|\varphi(2^{-j}\sqrt{H_{B_0}})\big(2^jt\big)^{\frac12}\int_0^\infty \chi(s,2^jt)e^{\frac{i2^jt}{4s}}e^{i2^{-j}ts(H_{B_0}+m^2\mp B_0)}f\,ds \big\|_{L^\infty(\R^2)}\\
&\lesssim\big(2^jt\big)^{\frac12}\int_0^\infty \chi(s,2^jt)|\sin(2^{-j}ts B_0)|^{-1}\,ds \big\|\varphi(2^{-j}\sqrt{H_{B_0}})f\big\|_{L^1(\R^2)}.
\end{split}
\end{equation*}
Since $s\in [\frac1{16}, 8]$ (the compact support of $\chi$ in $s$) and $B_0>0$,  if $2^{-j}t\leq \frac{\pi}{8B_0}$, then
\begin{equation}\label{>1}
\begin{split}
&\big\|\varphi(2^{-j}\sqrt{H_{B_0}})\big(2^jt\big)^{\frac12}\int_0^\infty \chi(s,2^jt)e^{\frac{i2^jt}{4s}}e^{i2^{-j}ts(H_{B_0}+m^2\mp B_0)}f\,ds \big\|_{L^\infty(\R^2)}\\
&\lesssim\big(2^jt\big)^{\frac12}(2^{-j}t)^{-1}\int_0^\infty \chi(s,2^jt)\,ds \big\|\varphi(2^{-j}\sqrt{H_{B_0}})f\big\|_{L^1(\R^2)}\\
&\lesssim 2^{2j}\big(2^jt\big)^{-\frac12}\big\|\varphi(2^{-j}\sqrt{H_{B_0}})f\big\|_{L^1(\R^2)}\\
&\lesssim 2^{2j}\big(1+2^jt\big)^{-\frac12}\big\|\varphi(2^{-j}\sqrt{H_{B_0}})f\big\|_{L^1(\R^2)}.
\end{split}
\end{equation}
Collecting \eqref{<1} and \eqref{>1}, it gives \eqref{est: mic-decay1}. To prove \eqref{est: mic-decay2}, we consider $0<t<T$.
For any $T>0$, there exists $j_0$ such that $2^{-j_0}T\leq \frac{\pi}{8B_0}$ with $j_0\in\Z_+$. For $j\leq j_0$, then
$2^j t\lesssim 1$, then one has \eqref{est: mic-decay2} from the first case. While for $j\geq j_0$,  if  $2^j t\lesssim 1$, one still has \eqref{est: mic-decay2} from the first case.
Otherwise, i.e. $2^j t\geq 1$, one has \eqref{est: mic-decay2} from the second case, since we always have $2^{-j}t\leq \frac{\pi}{8B_0}$ for $j\geq j_0$ and $0<t\leq T$.

\end{proof}

\subsection{Strichartz estimate}\label{sec:str}
In this section, we prove the Strichartz estimates \eqref{stri:w} in Theorem \ref{thm:dispersive} by using \eqref{est: mic-decay2}.
To this end, we need a variety of the abstract Keel-Tao's Strichartz estimates
theorem (\cite{KT}).
\begin{proposition}\label{prop:semi}
Let $(X,\mathcal{M},\mu)$ be a $\sigma$-finite measured space and
$U: I=[0,T]\rightarrow B(L^2(X,\mathcal{M},\mu))$ be a weakly
measurable map satisfying, for some constants $C$ may depending on $T$, $\alpha\geq0$,
$\sigma, h>0$,
\begin{equation}\label{md}
\begin{split}
\|U(t)\|_{L^2\rightarrow L^2}&\leq C,\quad t\in \mathbb{R},\\
\|U(t)U(s)^*f\|_{L^\infty}&\leq
Ch^{-\alpha}(h+|t-s|)^{-\sigma}\|f\|_{L^1}.
\end{split}
\end{equation}
Then for every pair $q,p\in[1,\infty]$ such that $(q,p,\sigma)\neq
(2,\infty,1)$ and
\begin{equation*}
\frac{1}{q}+\frac{\sigma}{p}\leq\frac\sigma 2,\quad q\ge2,
\end{equation*}
there exists a constant $\tilde{C}$ only depending on $C$, $\sigma$,
$q$ and $r$ such that
\begin{equation*}
\Big(\int_{I}\|U(t) u_0\|_{L^r}^q dt\Big)^{\frac1q}\leq \tilde{C}
\Lambda(h)\|u_0\|_{L^2}
\end{equation*}
where $\Lambda(h)=h^{-(\alpha+\sigma)(\frac12-\frac1p)+\frac1q}$.
\end{proposition}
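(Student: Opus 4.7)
The plan is to adapt the abstract Keel--Tao Strichartz theorem (\cite{KT}) to the present $h$-dependent setting, carefully tracking every factor of $h$ so as to recover the claimed loss $\Lambda(h)$.

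First I would obtain the full range of dispersive estimates by complex interpolation. Since $\|U(t)\|_{L^2\to L^2}\leq C$ implies $\|U(t)U(s)^*\|_{L^2\to L^2}\leq C^2$, Riesz--Thorin against the hypothesis \eqref{md} gives, for all $p\in[2,\infty]$ and $\gamma:=1-\tfrac{2}{p}=2(\tfrac12-\tfrac1p)$,
\[
\|U(t)U(s)^* f\|_{L^p}\lesssim h^{-\alpha\gamma}\bigl(h+|t-s|\bigr)^{-\sigma\gamma}\|f\|_{L^{p'}}.
\]
The admissibility constraint $\frac{1}{q}+\frac{\sigma}{p}\leq\frac{\sigma}{2}$ together with $q\geq 2$ forces $p\geq 2$, so this interpolated bound is available for every admissible pair.

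Next, by duality the desired estimate $\|U(\cdot)u_0\|_{L^q_tL^p_x(I\times X)}\leq \tilde C\,\Lambda(h)\|u_0\|_{L^2}$ is equivalent via the $TT^*$ argument to the bilinear inequality
\[
\Bigl|\iint_{I\times I}\langle U(t)U(s)^* F(s),G(t)\rangle\,ds\,dt\Bigr|\leq \tilde C^2\Lambda(h)^2\|F\|_{L^{q'}_tL^{p'}_x}\|G\|_{L^{q'}_tL^{p'}_x}.
\]
Inserting the interpolated dispersive bound and applying H\"older in $x$ reduces matters to a scalar convolution with kernel $K(t,s)=(h+|t-s|)^{-\sigma\gamma}$ acting on $\|F(\cdot)\|_{L^{p'}}$ and $\|G(\cdot)\|_{L^{p'}}$. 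Rescaling $(s,t)=h(s',t')$ extracts the factor $h^{2/q-\sigma\gamma}$, which combined with the prefactor $h^{-\alpha\gamma}$ and the identity $\gamma=2(\tfrac12-\tfrac1p)$ yields precisely $\Lambda(h)^2=h^{2/q-2(\alpha+\sigma)(1/2-1/p)}$.

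It thus remains to bound a dimensionless convolution with kernel $(1+|u|)^{-\sigma\gamma}$ against $L^{q'}\times L^{q'}$ data. The admissibility $\tfrac{2}{q}\leq\sigma\gamma$ is exactly the scaling relation for Hardy--Littlewood--Sobolev; when the inequality is strict I would conclude via Young's inequality (if $\sigma\gamma>1$) or standard HLS with slack (if $\sigma\gamma\leq 1$), the restriction $t\in I=[0,T]$ absorbing any remaining slack into a $T$-dependent constant. The main obstacle, and the reason for the exclusion $(q,p,\sigma)\neq(2,\infty,1)$, is the sharp endpoint $q=2$ with $\sigma\gamma=1$, where both Young and HLS fail. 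Here I would invoke the Keel--Tao bilinear interpolation on analytic families: decompose $|t-s|\sim 2^k$ dyadically, interpolate each slice between the $L^2\times L^2$ bound coming from the uniform $L^2$ estimate and the $L^1\times L^1$ bound coming from the kernel decay on that scale, and sum the resulting geometric series. The forbidden triple is precisely the one in which the off-diagonal analytic exponents collide and the interpolation argument cannot be closed.
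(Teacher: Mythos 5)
Your proposal is correct and follows the standard route: interpolate the $L^2\to L^2$ and $L^1\to L^\infty$ bounds to get the $L^{p'}\to L^p$ dispersive estimate, reduce by $TT^*$ to a bilinear form, rescale by $h$ to extract exactly $\Lambda(h)^2$, and close with Young/Hardy--Littlewood--Sobolev away from the endpoint and the Keel--Tao dyadic bilinear interpolation at $q=2$, the excluded triple $(2,\infty,1)$ being the double endpoint where that interpolation degenerates. The paper itself does not write out a proof but simply refers to \cite{Zhang} (as an analogue of the semiclassical estimates in \cite{KTZ, Zworski}), and the argument given there is essentially the one you sketch.
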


\begin{proof}
This is an analogue of the semiclassical Strichartz
estimates for Schr\"odinger in \cite{KTZ, Zworski}. We refer to \cite{Zhang} for the proof.
\end{proof}

Now we prove the Strichartz estimates \eqref{stri:w}. Recall  $\varphi$ in \eqref{LP-dp} and
Littlewood-Paley frequency cutoff $\varphi_k(\sqrt{H_{B_0}})$, for each $k\in\Z$, we
define
\begin{equation*}
u_k(t,\cdot)=\varphi_k(\sqrt{H_{B_0}})u^i(t,\cdot), \quad i=1,2,
\end{equation*}
where $u^i(t,x)$ is the solution of \eqref{equ:W1} and \eqref{equ:W2} respectively.
Then, for each $k\in\Z$,  $u_k(t,x)$ solves the Cauchy problem
\begin{equation}\label{leq}
\partial_{t}^2u_k+(H_{B_0}+m^2\mp B_0) u_k=0, \quad u_k(0)=f_k(z),
~\partial_tu_k(0)=g_k(z),
\end{equation}
where $f_k=\varphi_k(\sqrt{H_{B_0}})u^i(0,x)$ and
$g_k=\varphi_k(\sqrt{H_{B_0}})\partial_tu^i(0,x)$. Since $(q, p)\in \Lambda_s^W$ in definition \ref{ad-pair}, then $q, p\geq2$. Thus,  by using the square-function
estimates \eqref{square} and the Minkowski inequality, we obtain
\begin{equation}\label{LP}
\|u^i(t,x)\|_{L^q(I;L^p(\R^2))}\lesssim
\Big(\sum_{k\in\Z}\|u_k(t,x)\|^2_{L^q(I;L^p(\R^2))}\Big)^{\frac12},\quad i=1,2,
\end{equation}
where $I=[0,T]$.
Denote the half-wave propagator $U(t)=e^{it\sqrt{H_{B_0}+m^2\mp B_0}}$, then
we write
\begin{equation}\label{sleq}
\begin{split}
u_k(t,z)
=\frac{U(t)+U(-t)}2f_k+\frac{U(t)-U(-t)}{2i\sqrt{H_{B_0}+m^2\mp B_0}}g_k.
\end{split}
\end{equation}
By using \eqref{LP} and \eqref{sleq}, we complete the proof of \eqref{stri:w} after taking summation in $k\in\Z$ if we could prove
\begin{proposition}\label{lStrichartz} Let
$f_k=\varphi_k(\sqrt{H_{B_0}})f$ for $\varphi_k$ in \eqref{LP-dp} and $k\in\Z$. Then
\begin{equation}\label{lstri}
\|U(t)f_k\|_{L^q(I;L^p(\R^2))}\leq C_T
2^{ks}\|f\|_{L^2(\R^2)},
\end{equation}
where the admissible pair $(q,p)\in [2,+\infty]\times [2,+\infty)$ and $s$ satisfy
\eqref{adm} and \eqref{scaling}.
\end{proposition}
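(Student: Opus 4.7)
The plan is to deduce Proposition \ref{lStrichartz} from the abstract semiclassical Strichartz theorem (Proposition \ref{prop:semi}) by applying it to the frequency-localized propagator $U_k(t) := \varphi_k(\sqrt{H_{B_0}})\, U(t)$ at the dyadic scale $h = 2^{-k}$. The two hypotheses \eqref{md} are supplied, respectively, by the $L^2$ spectral theorem and by the decay estimate \eqref{est: mic-decay2} proved in Proposition \ref{prop: decay}.

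First, the operator $\sqrt{H_{B_0}+m^2\mp B_0}$ is self-adjoint and nonnegative, so $U(t)$ is unitary on $L^2(\R^2)$, and $\varphi_k(\sqrt{H_{B_0}})$ is a bounded spectral multiplier; hence $\|U_k(t)\|_{L^2\to L^2}\le C$ uniformly in $t$ and $k$. Second, since $U(t)U(s)^*=U(t-s)$ and $\varphi_k(\sqrt{H_{B_0}})$ is self-adjoint, we have
\begin{equation*}
U_k(t)U_k(s)^* f = \varphi_k(\sqrt{H_{B_0}})\,U(t-s)\,\varphi_k(\sqrt{H_{B_0}}) f,
\end{equation*}
and Proposition \ref{prop: decay} together with the $L^1$-boundedness of $\varphi_k(\sqrt{H_{B_0}})$ (which follows from the Bernstein inequality \eqref{est:Bern} with $p=q$) gives
\begin{equation*}
\|U_k(t)U_k(s)^* f\|_{L^\infty}\le C_T\, 2^{2k}(1+2^k|t-s|)^{-1/2}\|f\|_{L^1}.
\end{equation*}
Rewriting the right-hand side as $C_T\, 2^{3k/2}(2^{-k}+|t-s|)^{-1/2}$ and setting $h=2^{-k}$ matches the template of \eqref{md} with parameters $\alpha = 3/2$ and $\sigma = 1/2$.

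Now I check admissibility at the abstract level. The required condition $\tfrac{1}{q}+\tfrac{\sigma}{p}\le\tfrac{\sigma}{2}$ with $\sigma=1/2$ becomes $\tfrac{2}{q}\le\tfrac{1}{2}-\tfrac{1}{p}$, which is precisely \eqref{adm}; since $q\ge 2$ and $\sigma=1/2\ne 1$, the forbidden endpoint $(2,\infty,1)$ is never reached. Proposition \ref{prop:semi} then yields
\begin{equation*}
\|U_k(t)f\|_{L^q(I;L^p)} \le C_T\, h^{-(\alpha+\sigma)(1/2-1/p)+1/q}\|f\|_{L^2} = C_T\, 2^{k(1-2/p-1/q)}\|f\|_{L^2},
\end{equation*}
because $\alpha+\sigma=2$. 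Finally, the scaling identity \eqref{scaling} states $\tfrac{1}{q}+\tfrac{2}{p}=1-s$, so the exponent collapses to $ks$, and together with the trivial bound $\|\varphi_k(\sqrt{H_{B_0}})f\|_{L^2}\le\|f\|_{L^2}$ this produces \eqref{lstri}.

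The argument is essentially bookkeeping once Proposition \ref{prop: decay} is in hand, so I do not expect a serious obstacle. The one point that must be handled carefully is the reduction of $\|U_k(t)U_k(s)^* f\|_{L^\infty}$ to a bound with $\|f\|_{L^1}$ on the right: this is where the $L^1$-boundedness of the spectral cutoff $\varphi_k(\sqrt{H_{B_0}})$ enters, and it in turn relies on the Mehler form of the heat kernel \eqref{equ:Meh} and the Gaussian upper bounds used to derive the Bernstein inequality \eqref{est:Bern}. After that, checking that the semiclassical power $h^{-(\alpha+\sigma)(1/2-1/p)+1/q}$ simplifies to $2^{ks}$ under the two relations in Definition \ref{ad-pair} is a direct computation.
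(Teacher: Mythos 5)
Your proposal is correct and follows essentially the same route as the paper: frequency-localize the half-wave propagator, verify the two hypotheses of Proposition \ref{prop:semi} via the spectral theorem and the decay estimate \eqref{est: mic-decay2} with $\alpha=3/2$, $\sigma=1/2$, $h=2^{-k}$, and then simplify the exponent using \eqref{scaling}. You are in fact slightly more careful than the paper in handling the extra spectral cutoff in $U_k(t)U_k(s)^*$ via the $L^1$-boundedness of $\varphi_k(\sqrt{H_{B_0}})$, a point the paper's proof glosses over.
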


\begin{proof}
Since $f_k=\varphi_k(\sqrt{H_{B_0}})f$, then $$U(t)f_k=\varphi_k(\sqrt{H_{B_0}})e^{it\sqrt{H_{B_0}+m^2\mp B_0}} f:=U_k f.$$
By using the spectral theorem, we see
\begin{equation*}
\|U_k(t)f\|_{L^2(\R^2)}\leq C\|f\|_{L^2(\R^2)}.
\end{equation*}
By using \eqref{est: mic-decay2}, we obtain
\begin{equation*}
\begin{split}
\|U_k(t)U_k^*(s)f\|_{L^\infty (\R^2)}&=\|U_k(t-s)f\|_{L^\infty (\R^2)}\\
&\leq C_T 2^{\frac32 k}\big(2^{-k}+|t-s|\big)^{-\frac12}\|f\|_{L^1(\R^2)},
\end{split}
\end{equation*}
Then the estimates \eqref{md}
for $U_{k}(t)$ hold for $\alpha=3/2$, $\sigma=1/2$ and
$h=2^{-k}$. Hence, Proposition \ref{prop:semi} gives
\begin{equation*}
\|U(t)f_k\|_{L^q(I;L^p(\R^2))}=\|U_k(t)f\|_{L^q(I;L^p(\R^2))}\leq C_T
2^{k[2(\frac12-\frac1p)-\frac1q]} \|f\|_{L^2(\R^2)}.
\end{equation*}
which implies \eqref{lstri} since $s=2(\frac12-\frac1p)-\frac1q$.
\end{proof}

Then, due to the well known decay estimates for solutions to the wave equation (see \cite{SS}), we deduce
 \begin{align*}
 \|u(t,x)\|_{[L^q(I;L^p(\R^2))]^2}\leq C_T \|f\|_{[\dot{\mathcal{H}}^s_{\A}(\R^2)]^2}+\|-i\D_A f\|_{[\dot{\mathcal{H}}^{s-1}_{\A}(\R^2)]^2},
 \end{align*}
hence by Lemma \ref{lem:D-norm} we obtain
 \begin{equation}
 \begin{aligned}
 \|u(t,x)\|_{[L^q(I;L^p(\R^2))]^2}\leq C_T \|f\|_{[\mathcal{H}^s_{\A}(\R^2)]^2}.
 \end{aligned}
 \end{equation}
Then we conclude the proof of Theorem \ref{thm:dispersive}.

\begin{center}

\end{center}

\end{document}